\documentclass[11pt,a4paper]{article}
\usepackage[utf8]{inputenc}
\usepackage{microtype}
\usepackage{amsmath,amsthm,amssymb}
\usepackage{mathrsfs} 
\usepackage[T1]{fontenc}
\usepackage{ae,aecompl}
\usepackage{times}
\usepackage[british]{babel}
\usepackage{aliascnt}
\usepackage[colorlinks,pagebackref,hypertexnames=false]{hyperref} 
\usepackage{url}
\usepackage[alphabetic,backrefs]{amsrefs}
\usepackage{nth}
\usepackage{float}
\usepackage[all]{xy}
\usepackage{tikz-cd} 
\usepackage{bookmark}
\usepackage[margin=1.9cm]{geometry}
\usepackage{enumerate}


\newcommand{\rd}{{\rm d}}

\newcommand{\ri}{{\rm i}}
\newcommand{\rj}{{\rm j}}

\newcommand{\rp}{{\rm p}}
\newcommand{\rmq}{{\rm q}}
\newcommand{\rr}{{\rm r}}

\newcommand{\rG}{{\rm G}}


\newcommand{\oep}{{\overline {\Upsilon}}}





\newcommand{\cD}{\mathcal{D}}

\newcommand{\cL}{\mathcal{L}}






\newcommand{\Q}{\mathbb{Q}}
\newcommand{\R}{\mathbb{R}}
\newcommand{\C}{\mathbb{C}}


\newcommand{\SU}{{\rm SU}}
\newcommand{\GL}{\mathrm{GL}}

\newcommand{\Curl}{\mathrm{Curl}}


\renewcommand{\P}{\mathbb{P}}

\renewcommand{\epsilon}{\varepsilon}

\newcommand{\Hol}{\mathrm{Hol}}

\newcommand{\Ric}{{\rm Ric}}

\renewcommand{\Im}{\mathop{\mathrm{Im}}}

\renewcommand{\Re}{\mathop{\mathrm{Re}}}

\newcommand{\tr}{\mathop{\mathrm{tr}}\nolimits}

\newcommand{\vol}{\mathrm{vol}}

\newcommand{\gt}{\texorpdfstring{\mathrm{G}_2}{\space}}

\renewcommand{\div}{\mathrm{div}}

\newcommand{\qandq}{\quad\text{and}\quad}

\def\<{\mathopen{}\left<}
\def\>{\right>\mathclose{}}
\def\({\mathopen{}\left(}
\def\){\right)\mathclose{}}

\usepackage{multicol, color}

\definecolor{gold}{rgb}{0.85,.66,0}
\definecolor{cherry}{rgb}{0.9,.1,.2}
\definecolor{burgundy}{rgb}{0.8,.2,.2}
\definecolor{orangered}{rgb}{0.85,.3,0}
\definecolor{orange}{rgb}{0.85,.4,0}
\definecolor{olive}{rgb}{.45,.4,0}
\definecolor{lime}{rgb}{.6,.9,0}
\definecolor{green}{rgb}{.2,.7,0}
\definecolor{grey}{rgb}{.4,.4,.2}
\definecolor{brown}{rgb}{.4,.3,.1}


\newtheorem{thmx}{Theorem}


\newtheorem{theorem}{Theorem}
\newtheorem{prop}{Proposition}

\newtheorem{lemma}[prop]{Lemma}

\numberwithin{substep}{step}

\numberwithin{subcase}{case}

\theoremstyle{remark}
\newtheorem{remark}{Remark}

\theoremstyle{definition}
\newtheorem{definition}{Definition}
\newtheorem{example}{Example}

\setlength{\marginparwidth}{3cm}
\usepackage{todonotes}
\usepackage{comment}

\numberwithin{equation}{section}
\numberwithin{theorem}{section}
\numberwithin{prop}{section}
\numberwithin{cor}{section}
\numberwithin{definition}{section}
\numberwithin{remark}{section}
\numberwithin{example}{section}

\allowdisplaybreaks

\usepackage{titling}
\setlength{\droptitle}{-1.5cm}
\usepackage{changepage}

\begin{document}
	\title{Flows of \texorpdfstring{G\textsubscript{2}}{G2}-structures on contact Calabi--Yau $7$-manifolds}
	\author{
	Jason D. Lotay \\
    \emph{University of Oxford}
    \and 
    Henrique N. S\'a Earp 
    \qquad\qquad
    Julieth Saavedra
    \\
  \emph{University of Campinas (Unicamp)}}
  \date{}

\maketitle
	
\vspace{-0.5cm}	
\begin{abstract}
    We study the Laplacian flow and coflow on contact Calabi-Yau $7$-manifolds.  We show that the natural initial condition leads to an ancient solution of the Laplacian flow with a finite time Type I singularity which is not a soliton, whereas it produces an immortal (though neither eternal nor self-similar) solution of the Laplacian coflow which has an infinite time singularity of Type IIb, unless the transverse Calabi--Yau geometry is flat.  The flows in each case collapse (under normalised volume) to a lower-dimensional limit, which is either $\mathbb{R}$, for the Laplacian flow, or standard $\mathbb{C}^3$, for the Laplacian coflow.  We also study the Hitchin flow in this setting, which we show coincides with the Laplacian coflow, up to reparametrisation of time, and defines an (incomplete) Calabi--Yau structure on the spacetime track of the flow.
\end{abstract}
	
\begin{adjustwidth}{0.95cm}{0.95cm}
    \tableofcontents
\end{adjustwidth}
	
\section{Introduction}
	
Geometric flows have proven to be a powerful analytical tool in a variety of geometric and topological problems. In the case of $\gt$-geometry, they provide a method to search for  metrics with $\gt$ holonomy, which are then Ricci-flat, on an oriented and spin $7$-manifold $M$. This corresponds to varying a $\gt$-structure, given by a non-degenerate $3$-form $\varphi$ on $M$, so that it becomes torsion-free. Flows in $\gt$-geometry were first introduced by Bryant \cite{Bryant2006} and have been studied by several authors, see e.g.~\cites{Bryant2011, Karigiannis2012,   Lauret2017, Fino2018, Grigorian2019,Dwivedi2019, Karigiannis2020,  Loubeau2019, Lotay2015, Loubeau2021}. 
	
A $\gt$-structure $\varphi$ determines a  metric $g_{\varphi}$ and orientation with Riemannian volume form $\vol_{\varphi}$. 
The \emph{full torsion tensor} $T$ of $\varphi$ is a 2-tensor which is equivalent to $\nabla^{g_\varphi}\varphi$, where $\nabla^{g_{\varphi}}$ is the Levi-Civita connection of $g_{\varphi}$.  Pairs $(M^7,\varphi)$ which satisfy $T\equiv0$ (i.e.~so that $\varphi$ is torsion-free) are called $\gt$-\emph{manifolds} and are of particular interest since the holonomy group of $g_{\varphi}$ in this case is contained in $\gt$.  However,  complete examples of $\gt$-manifolds $(M,\varphi)$ are very difficult to construct, especially when  $M$ is required to be compact.  
Fernandez and Gray \cite{Fernandez1982} showed that the torsion-free condition is equivalent  to $\varphi$ being both \emph{closed} and \emph{coclosed}, i.e,  $\rd\varphi=0$ and $\rd\!\ast_\varphi\!\varphi=0$, where $*_{\varphi}$ is the Hodge star defined by $g_{\varphi}$ and $\vol_{\varphi}$.  This alternative viewpoint on the torsion-free condition as a system of nonlinear PDE is fundamental to $\gt$ geometry and to geometric flows.

\subsection{Laplacian coflow}\label{ss:L.coflow}	
Our first goal is to study the  \emph{Laplacian coflow} of $\gt$-structures, introduced by Karigiannis, McKay and Tsui \cite{Karigiannis2012}\footnote{In \cite{Karigiannis2012}, the flow \eqref{eq: Laplacian.coflow} is written with an additional minus sign on the right-hand side, but this seems incorrect given the work on the Laplacian flow \cites{Bryant2011,Lotay2017} and the modified Laplacian coflow \cite{Grigorian2013}.} 
which is given by 
\begin{equation}
\label{eq: Laplacian.coflow}
	\frac{\partial\psi_t}{\partial t}
	=\Delta_{\psi_t}\psi_t
	:=(\rd\rd^{\ast_t}+\rd^{\ast_t}\rd)\psi_t,
\end{equation}
where $\psi_t:=\ast_t\varphi_t$ is the Hodge dual of the $\gt$-structure $\varphi_t$ (here, we write $*_t:=*_{\varphi_t}$) and $\Delta_{\psi_t}$ 
is the Hodge Laplacian of $g_{t}:=g_{\varphi_t}$ on $4$-forms.  
If $M$ is compact, critical points of this flow \eqref{eq: Laplacian.coflow} are then Hodge duals of non-degenerate 3-forms which are closed and coclosed, i.e.~torsion-free.

We  restrict the Laplacian coflow  \eqref{eq: Laplacian.coflow} to the case where $\psi_t$ is closed, i.e.~the $\gt$-structure $\varphi_t$ is coclosed. In this context, solutions of  \eqref{eq: Laplacian.coflow} (if they exist) preserve the cohomology class  $[\psi_t]=[\psi_0]\in H^4(M)$, for all $t$, and the flow seeks critical points in this class.  When $M$ is compact, \eqref{eq: Laplacian.coflow} can be interpreted as the gradient flow  of \emph{Hitchin's volume functional \cite{Hitchin2001a}} and so the volume of $M$ increases monotonically along the flow (see \cite{Grigorian2013}).
	
One immediate problem with the Laplacian coflow is that the $4$-form $\psi=*_{\varphi}\varphi$ is generated by both the 3-forms $\varphi$ and $-\varphi$: in particular, $\psi$ does not determine the orientation on $M$. 
\color{black} However, it is natural to fix an orientation throughout the flow, which is determined for example by a choice of $\gt$-structure dual to the initial 4-form. \color{black} 
Another key problem, which is much more serious, is that it is not known whether solutions to \eqref{eq: Laplacian.coflow} actually exist in general, even for an arbitrarily short time.  There is a modification of the coflow \cite{Grigorian2013} which does have guaranteed short-time existence, but the critical points are no longer necessarily closed and coclosed 4-forms, and so it does not currently appear to be useful as a tool for studying key problems in $\gt$ geometry.

Finally, even if one has short-time existence and uniqueness of the flow \eqref{eq: Laplacian.coflow}, one finds that the torsion tensor is not guaranteed to satisfy a heat-type equation.  Thus the Laplacian coflow is not a \emph{reasonable} (sometimes also called \emph{Ricci-like}) flow of $\rG_2$-structures in the sense of \cite{Chen2018},  so the analytic theory developed there does not apply. 
This issue is linked to the lack of parabolicity of the Laplacian coflow, even in the direction of closed 4-forms (see \cite{Grigorian2013}).

In Theorem \ref{thm:Laplacian.coflow.cCY}, we will find immortal families of coclosed $\gt$-structures solving the Laplacian coflow \eqref{eq: Laplacian.coflow}.

\subsection{Laplacian flow}

A geometric flow of $\gt$-structures which has received much more attention is the Laplacian flow introduced by Bryant \cite{Bryant2006}:
\begin{equation}
\label{eq: Laplacian.flow}
    \frac{\partial\varphi_t}{\partial t}=\Delta_{\varphi_t}\varphi_t:=(\rd\rd^{*_t}+\rd^{*_t}\rd)\varphi_t,
\end{equation}
where we again write $*_t$ for the Hodge star induced by the $\gt$-structure $\varphi_t$, and $\Delta_{\varphi_t}$ is the Hodge Laplacian of $g_t:=g_{\varphi_t}$ on 3-forms.  Again, if $M$ is compact, the Laplacian flow \eqref{eq: Laplacian.flow} has torsion-free $\gt$-structures as its critical points.

Typically, one restricts the Laplacian flow \eqref{eq: Laplacian.flow} to \emph{closed} $\gt$-structures, because in that setting one obtains a good analytic theory, the cohomology class of the flowing $\gt$-structure $\varphi_t$ is preserved (so we are seeking torsion-free $\gt$-structures in a given class), and the flow (in the compact setting) can be interpreted as the gradient flow of the Hitchin volume functional on $[\varphi_t]=[\varphi_0]$.  See \cites{Bryant2011, Karigiannis2020, Lotay2017, Lotay2015} for more details on the Laplacian flow.
By contrast, it is not known whether starting \eqref{eq: Laplacian.flow} at a \emph{coclosed} $\gt$-structure may preserve coclosedness, however much one might naively expect this from a flow which is initially in the direction of coexact forms.
\color{black} Indeed, to our knowledge, there are no particular examples in the literature of a Laplacian flow preserving the coclosed property. On the other hand, neither are there explicit examples for which the coclosed condition is not preserved, as far as the authors are aware, so the field seems still very much open to exploration. \color{black}   
 We also note that we do not currently have any general analytic theory for the Laplacian flow, except when restricted to closed $\gt$-structures.  

In Theorem \ref{thm:Laplacian.flow.cCY}, we will obtain ancient families of coclosed $\gt$-structures solving the Laplacian flow \eqref{eq: Laplacian.flow}.

\subsection{Singularities}

For various flows of $\gt$-structures (cf.~\cites{Lotay2017,Chen2018}) it has been shown that blow-up of the following quantity characterises the formation of finite-time singularities: 
\begin{equation*}
    \Lambda(x,t)=(|Rm(x,t)|^2_{g_t} +|T(x,t)|^4_{g_t} +|\nabla^{g_t} T(x,t)|^2_{g_t})^{\frac{1}{2}}
\end{equation*}
for $x\in M$ and time $t$.  We then let
\begin{equation}\label{eq:Lambda.t}
    \Lambda(t)=\sup_{x\in M}\Lambda(x,t)
\end{equation}
for convenience and introduce the following terminology, motivated by the singularity classification in Ricci flow (cf.~\cite{Hamilton2006}) and the work in \cites{Lotay2017,Chen2018}. 

\begin{definition}\label{dfn:sing.types}
    Suppose that $(M^7,\varphi_t,\psi_t,g_t)$ is a solution to a flow of $\gt$-structures on a closed manifold on a maximal time interval $[0,T)$ and let $\Lambda(t)$ be as in \eqref{eq:Lambda.t}.
    
    If we have a finite-time singularity, i.e.~$T<\infty$, we say that the solution forms  
\begin{itemize}
    \item a \emph{Type I singularity} (rapidly forming) if $\sup_{t\in[0,T)}(T-t)\Lambda(t) <\infty$; and otherwise 
    \item a \emph{Type IIa singularity} (slowly forming) if $\sup_{t\in [0,T)}(T-t)\Lambda(t)=\infty$.
\end{itemize}

If we have an \emph{infinite-time} singularity, where $T=\infty$, then it is 
\begin{itemize}
    \item a \emph{Type IIb singularity} (slowly forming) if $\sup_{t\in  [0,\infty)}t\Lambda(t)=\infty$; and otherwise  
    
    \item a \emph{Type III singularity} (rapidly forming) if $\sup_{t\in  [0,\infty)}t\Lambda(t)<\infty$.
\end{itemize}
\end{definition}

In Theorem \ref{thm:sings}, we will apply this singularity classification to our solutions to the flows \eqref{eq: Laplacian.coflow} and  \eqref{eq: Laplacian.flow}.

\subsection{Hitchin flow}

The \emph{Hitchin flow}\footnote{It should be made clear that the Hitchin flow for $\gt$-structures is not a geometric flow in the usual sense, but it is nonetheless a valid evolution equation for $\gt$-structures.} \cite{Hitchin2001a}  for a family $\varphi_t$ of  $\gt$-structures on $M^7$ with dual 4-form $\psi_t$  is given by solving
\begin{equation}
\label{eq:Hitchin.flow}
    \frac{\partial\psi_t}{\partial t}=\rd\varphi_t\qandq \rd\psi_t=0
\end{equation}
for $t\in I\subseteq\mathbb{R}$.  We note that the Hitchin flow \eqref{eq:Hitchin.flow} is an evolution equation for \emph{coclosed} $\gt$-structures.  The significance of the flow is that a solution defines a $4$-form $\Phi$ on the product $I\times M$ by
\begin{equation}
    \Phi=\rd t\wedge\varphi_t+\psi_t,
\end{equation}
which is closed and so defines a torsion-free $\mathrm{Spin}(7)$-structure on $I\times M$ (and thus a Ricci-flat metric with holonomy contained in $\mathrm{Spin}(7)$).  
Relatively little is known about the Hitchin flow and since the 7-manifolds we are studying naturally admit coclosed $\gt$-structures, it is worth examining the Hitchin flow in this context.

In Theorem \ref{thm:Hitchin.flow}, we will obtain immortal solutions to the Hitchin flow \eqref{eq:Hitchin.flow}, which bear a close relation to our Laplacian coflow \eqref{eq: Laplacian.coflow} solutions, and define an incomplete \emph{Calabi-Yau} structure on the 8-dimensional spacetime track of the flow.

\subsection{Outline and main results}
\label{sec: main results}

We  will consider the flows described above on a \emph{contact Calabi--Yau (cCY)} 7-manifold $(M^{7},g,\eta,\Upsilon)$, where $(M^7,g)$ is a Sasakian 7-manifold with Riemannian metric $g$, contact form $\eta$ and transverse K\"ahler form $\omega=\rd\eta\in\Omega^{1,1}(M)$, and $\Upsilon\in\Omega^{3,0}(M)$ is a  transverse holomorphic volume form; here $(p,q)$ denotes basic bidegree with respect to the horizontal distribution $\cD=\ker\eta$, cf.~\S\ref{sec: cCY manifolds}. An important class of compact  examples of cCY 7-manifolds are \emph{Calabi--Yau links}, which are total spaces of $S^1$-(orbi)bundles over  Calabi--Yau $3$-orbifolds famously listed by Candelas-Lynker-Schimmrigk \cite{Candelas1990}; these   will be discussed in Example \ref{ex: CY links}.

On a cCY 7-manifold there exists a natural 1-parameter family of coclosed $\gt$-structures defined, for each $\epsilon>0$, by  
\begin{equation}
\label{eq:std.phi}
	\varphi=\epsilon\eta\wedge\omega+\Re\Upsilon,
\end{equation} 
with induced metric $g_{\varphi}$ (which equals $g$ when $\epsilon=1$) and corresponding dual $4$-form
\begin{equation}
\label{eq:std.psi}
	\psi=\ast_{\varphi}\varphi=\frac{1}{2}\omega^2 -\epsilon\eta\wedge\Im\Upsilon.
\end{equation}
In the present paper, we are interested in flows of $\gt$-structures on cCY 7-manifolds starting at   the natural coclosed $\gt$-structure in \eqref{eq:std.phi} or its dual 4-form in \eqref{eq:std.psi}. All of our results are stated in terms of the following standard data \eqref{eq: standard setup}:
\begin{equation}
\label{eq: standard setup}
\tag{S}
\text{\parbox{.90\textwidth}
    {$\bullet$ $(M^7,g_0,\eta_0,\Upsilon_0)$ a contact Calabi-Yau 7-manifold;\\
    $\bullet$ $\cD_0=\ker\eta_0$ its horizontal distribution, and $\omega_0=\rd\eta_0$ its transverse Kähler form;\\
    $\bullet$ for $\epsilon>0$, the $\gt$-structure defined by \eqref{eq:std.phi} and \eqref{eq:std.psi}, i.e. 
    $$
    \varphi_0=\epsilon\eta_0\wedge\omega_0+\Re\Upsilon_0
    \qandq
    \psi_0=\frac{1}{2}\omega_0^2 -\epsilon\eta_0\wedge\Im\Upsilon_0.$$
}}
\end{equation}

In \S\ref{Section.Laplacian.coflow}, we consider the Laplacian coflow  on a cCY 7-manifold and prove the following. 

\begin{thmx}[Laplacian coflow on contact Calabi--Yau 7-manifolds]
\label{thm:Laplacian.coflow.cCY} 
In the setup \eqref{eq: standard setup}, the Laplacian coflow \eqref{eq: Laplacian.coflow}  on $M^7$, with initial data determined by $\varphi_0$, is solved by the following family of coclosed $\gt$-structures $\varphi_t$, with associated metric $g_t$, volume form $\vol_t$ and dual 4-form $\psi_t$:
    \begin{align*}
	\varphi_t&= \epsilon\rp(t)^{-1}\eta_0\wedge\omega_0+\rp(t)^{3}\Re\Upsilon_0;
	\\
	\psi_t&= \frac{1}{2}\rp(t)^{4}\omega_0^2-\epsilon\eta_0\wedge\Im\Upsilon_0;
	\\
	g_t&= \epsilon^2\rp(t)^{-6}\eta_0^2+\rp(t)^{2}g_{\cD_0};\\
	\vol_t&=\epsilon\rp(t)^3\eta_0\wedge\vol_{\cD_0},
\end{align*}
    where $\rp(t)=(1+10\epsilon^2t)^{1/10}$ and $t\in (-\frac{1}{10\epsilon^2},\infty)$.  
    Hence, the solution of the Laplacian coflow is immortal, with a finite time singularity (backwards in time) at 
    $t=-\frac{1}{10\epsilon^2}$.
\end{thmx}

\begin{remark}
    If $M^7$ is compact then the volume of $M$ determined by the $\gt$-structure $\varphi$ on $M$ is:
$$
\mathcal{H}(\varphi):=\textrm{Vol}(M,\varphi)=\frac{1}{7}\int_M\varphi\wedge \psi.
$$
    Along the Laplacian coflow solution given in Theorem \ref{thm:Laplacian.coflow.cCY} for a compact cCY 7-manifold we have that
$$
\mathcal{H}(\varphi_t)=(10t+1)^{3/10}\mathcal{H}(\varphi_0).
$$
    Hence, the Hitchin functional on the cohomology class $[\psi_0]$, which is just $\mathcal{H}(\varphi_t)$, tends to infinity as $t\to\infty$ and tends to $0$ as $t\to-1/10$.  In particular, recalling that the Laplacian coflow is the gradient flow of the Hitchin functional on $[\psi_0]$, we observe that the Hitchin functional is unbounded above and does not have a positive lower bound on $[\psi_0]$.
\end{remark}

In $\S$\ref{Section.Laplacian.flow}, we switch attention to the Laplacian flow and prove the following result.

\begin{thmx}[Laplacian flow on contact Calabi--Yau 7-manifolds]
\label{thm:Laplacian.flow.cCY}
    In the setup \eqref{eq: standard setup}, the Laplacian flow \eqref{eq: Laplacian.flow} on $M^7$, with initial data determined by $\varphi_0$, is solved by the following family of \emph{coclosed} $\gt$-structures $\varphi_t$ on $M^7$, with associated metric $g_t$, volume form $\vol_t$ and dual 4-form $\psi_t$ given by
    \begin{align*}
    \varphi_t&=\epsilon\rmq(t)\eta_0\wedge\omega_0+\Re\Upsilon_0;\\
    \psi_t&=\frac{1}{2}\omega_0^2-\epsilon\rmq(t)\eta_0\wedge\Im\Upsilon_0;\\
    g_t&=\epsilon^2\rmq(t)^2\eta_0^2+g_{\cD_0};\\
    \vol_t&=\epsilon\rmq(t)\eta_0\wedge\vol_{\cD_0},
\end{align*}
    where $\rmq(t)=(1-8\epsilon^2t)^{-1/2}$ and $t\in(-\infty,\frac{1}{8\epsilon^2})$.  
    Hence, the solution of the Laplacian flow is ancient, with a finite time singularity (forwards in time) at $t=\frac{1}{8\epsilon^2}$.
\end{thmx}

\begin{remark}
Here, we find that the Laplacian flow solution in Theorem \ref{thm:Laplacian.flow.cCY} has the property that $[\psi_t]$ is not constant unless $[\psi_0]=0$.  Moreover, when $[\psi_0]\neq 0$, one can detect the finite time singularity of the flow using the cohomology class $[\psi_t]$, which is somewhat reminiscent of the K\"ahler--Ricci flow.
\end{remark}

\begin{remark}
We can interpret taking $\epsilon\to 0$ in Theorems \ref{thm:Laplacian.coflow.cCY} and \ref{thm:Laplacian.flow.cCY} as obtaining degenerate eternal solutions to the Laplacian flow or coflow, which is simply given by the Calabi--Yau structure on the horizontal distribution $\cD_0$.  In the quasi-regular setting for the contact Calabi--Yau structure, where $M$ fibres by circles over a Calabi--Yau 3-orbifold $Z$, sending $\epsilon$ to $0$ is equivalent to shrinking the circle fibres and collapsing to $Z$.

If we instead consider $\epsilon\to\infty$ in Theorems \ref{thm:Laplacian.coflow.cCY} and \ref{thm:Laplacian.flow.cCY}, we appear to obtain a degenerate solution to the flow, which is only defined for non-negative times for the coflow and for non-positive times for the flow.  Again, in the quasi-regular setting, this would be equivalent to expanding the circle fibres to infinite size (usually interpreted as becoming lines) over the Calabi--Yau 3-orbifold base. 
\end{remark}

When $M$ is compact, we can describe the singularities of our flow solutions in terms of Definition \ref{dfn:sing.types}.

\begin{thmx}[Singularities of the Laplacian flow and coflow]
\label{thm:sings}
Suppose we are in the setup \eqref{eq: standard setup} and $M$ is compact. 
\begin{itemize}
    \item[(a)] The Laplacian coflow solution in Theorem \ref{thm:Laplacian.coflow.cCY} has an infinite-time Type IIb singularity, unless the transverse metric on $\cD_0$ is flat, in which case it has an infinite-time Type III singularity.
    \item[(b)] The Laplacian flow solution in Theorem \ref{thm:Laplacian.flow.cCY} has a finite-time Type I singularity.
\end{itemize}    
\end{thmx}

\begin{remark}
This theorem gives the first examples of compact solutions to the Laplacian coflow which have an infinite-time Type IIb singularity.  It also provides the first compact solutions to the Laplacian flow which have a finite-time singularity, and the first Type I singularity which is \emph{not} a soliton. 
 \color{black} Finite-time \color{black} singularities are not yet known to occur in the Laplacian flow for \emph{closed} $\gt$-structures on compact manifolds, which moreover are not expected to be Type I because there are no compact shrinking solitons in this setting.  
\end{remark}

In Section \ref{sec: Hitchin.flow} we study the Hitchin flow for the natural coclosed $\gt$-structures on cCY 7-manifolds, and obtain the following result.

\begin{thmx}[Hitchin flow on contact Calabi--Yau 7-manifolds]
\label{thm:Hitchin.flow}
    In the setup \eqref{eq: standard setup}, the Hitchin flow \eqref{eq:Hitchin.flow} on $M^7$, with initial data determined by $\varphi_0$, is solved by the following family of coclosed $\gt$-structures $\varphi_t$, with associated metric $g_t$, volume $\vol_t$ and dual 4-form $\psi_t$ given by
    \begin{align*}
    \varphi_t&=\epsilon\rr(t)^{-1}\eta_0\wedge\omega_0+\rr(t)^3\Re\Upsilon_0;\\
    \psi_t&=\frac{1}{2}\rr(t)^4\omega_0^2-\epsilon\eta_0\wedge\Im\Upsilon_0;\\
    g_t&=\epsilon^2\rr(t)^{-6}\eta_0^2+\rr(t)^2g_{\cD_0};\\
    \vol_t&=\epsilon\rr(t)\eta_0\wedge\vol_{\cD_0},
\end{align*}
where $\rr(t)=(1+\frac{5}{2}\epsilon t)^{1/5}$ and $t\in(-\frac{2}{5\epsilon},\infty)$. This Hitchin flow solution coincides with the Laplacian coflow in Theorem \ref{thm:Laplacian.coflow.cCY}, up to reparametrisation of time,
and it defines an incomplete Calabi--Yau structure on $(-\frac{2}{5\epsilon},\infty)\times M$, with K\"ahler form $\widehat{\omega}$, metric $\widehat{g}$ with $\Hol(\widehat{g})\subseteq \SU(4)$, and holomorphic volume form $\widehat{\Upsilon}$, as follows:
\begin{align*}
    \widehat{\omega}&=\epsilon\rr(t)^{-3}\rd t\wedge\eta_0+\rr(t)^2\omega_0;\\
    \widehat{g}&=\rd t^2+\epsilon^2\rr(t)^{-6}\eta_0^2+\rr(t)^2g_0;\\
    \widehat{\Upsilon}&=(\rr(t)^3\rd t+i\epsilon\eta_0)\wedge \Upsilon_0.
\end{align*}
\end{thmx}

\begin{remark} The relation in Theorem \ref{thm:Hitchin.flow} between contact Calabi--Yau 7-manifolds, the Hitchin flow and Calabi--Yau structures on 8-manifolds was previously known, for example, by work in \cite{ContiFino} and \cite{Freibert}.  In the quasi-regular setting, the Calabi--Yau structures in  Theorem \ref{thm:Hitchin.flow} arise via the well-known Calabi ansatz on open subsets of complex line bundles over Calabi--Yau (or, more generally, K\"ahler--Einstein) orbifolds.  It is interesting however to observe the close connection between the Laplacian coflow and the Hitchin flow in this setting, which is a novel aspect of our study.  \color{black} We have no reason to expect such a relation between these flows to occur in general.\color{black}
\end{remark}

\begin{remark}
A natural question, if perhaps a little bold, is whether flows on cCY $7$-manifolds might actually converge to a torsion-free $\gt$-structure. While cautioning against excessive optimism, we notice that there are many compact examples with no apparent topological obstructions, on which moreover such a structure would induce a metric with holonomy group $\gt$, cf.~Remark \ref{rem: optimist topology} in \S\ref{sec: cCY manifolds}.  We emphasise that the study in this article does not pertain to this question, because the  $\gt$-structures under consideration admit a non-trivial Killing field and so could not converge under the flow to a torsion-free $\gt$-structure.
\end{remark}

\bigskip 

\noindent\textbf{Acknowledgements:} The authors would like to thank Simon Salamon, Mark Haskins and Andr\'es Moreno  for some valuable discussions.
JL and HSE were supported by a UK Royal Society Newton Mobility Award [NMG$\backslash$R1$\backslash$191068].
JL is also partially supported by the Simons Collaboration on Special Holonomy in Geometry, Analysis, and Physics ($\#724071$ Jason Lotay). 
HSE has also been supported by the São Paulo Research Foundation (Fapesp)  \mbox{[2018/21391-1]} and the Brazilian National Council for Scientific and Technological Development (CNPq)  \mbox{[311128/2020-3]}.
JPS was supported by the Coordination for the Improvement of Higher Education Personnel-Brazil (CAPES) [88882.329037/2019-1].

\bigskip

\noindent\textbf{Data availability statement:} Data sharing not applicable to this article as no datasets were generated or analysed during the current study.

\section{Preliminaries on coclosed \texorpdfstring{G\textsubscript{2}}{G2}-structures}
\label{Section.coclosed}

In this section, we collect definitions and important properties regarding (coclosed) $\gt$-structures which will be useful in the study of flows on contact Calabi--Yau 7-manifolds.  

\subsection{\texorpdfstring{G\textsubscript{2}}{G2}-structures and their torsion forms}

A $\gt$-structure on a (orientable and spin) 7-manifold $M$ is a reduction of the structure group  of $TM$ to $\gt$, where $\gt$ is viewed as the subgroup of $\GL(7,\R)$ containing all linear maps preserving the $3$-form 
$$ 
\varphi_0
=e^{123}+e^{145}+e^{167}+e^{246}-e^{257}-e^{347}-e^{356}
\in\Lambda^3(\R^7)^*,
$$ 
where $e^i=dx^i$,  $e^{ij}=e^i\wedge e^j$ etc. It is equivalently determined by a $3$-form  $\varphi$ on $M$ such that $(TM,\varphi)$ is pointwise isomorphic to $(\R^7,\varphi_0)$. 
A $\gt$-structure $\varphi$ determines a Riemannian metric $g_{\varphi}$ and an orientation given by the Riemannian volume form $\vol_{\varphi}$ so that
$$
	6g_\varphi(X,Y)  \vol_{\varphi}
	=(X\lrcorner \varphi)\wedge (Y\lrcorner\varphi)\wedge\varphi
$$
for any tangent vectors $X,Y$ on $M$.  For simplicity, we  write $g=g_{\varphi}$. The metric  and the orientation determine a Hodge star operator $\ast_{\varphi}$, so we  have the dual $4$-form $\psi=\ast_{\varphi}\varphi$ of the $\gt$-structure $\varphi$.  

A $\gt$-structure gives rise to a decomposition of differential forms on $M$ corresponding to irreducible $\gt$ representations. In particular, the spaces $\Omega^2$ and $\Omega^3$ of $2$-forms and $3$-forms decompose as 
\begin{align}
\label{eq:form.decomp}
	\Omega^2 &= \Omega^2_7\oplus\Omega^2_{14}\qandq
	\Omega^3  = \Omega^3_1\oplus\Omega_{7}^{3}\oplus\Omega^3_{27},
\end{align}
where $\Omega^k_l$ has (pointwise) dimension $l$ and this decomposition is orthogonal with respect to the metric $g$.  Via the Hodge star, this defines corresponding decompositions of the 4-forms  $\Omega^4$ and 5-forms $\Omega^5$.  If we let $S^2$ denote the symmetric 2-tensors on $M$ then, as in \cite{Bryant2006}, we define a linear operator $ \rj_{\varphi}:\Omega^3\rightarrow S^2$ by
\begin{align}
\label{Eq:j.operator}
	\rj_{\varphi}(\gamma)(X,Y)
	&=\ast_{\varphi}((X\lrcorner \varphi)\wedge (Y\lrcorner \varphi)\wedge\gamma),
\end{align}
where $X,Y$ are tangent vectors on $M$.  Then $\rj_{\varphi}$ is surjective with kernel equal to $\Omega^3_7$ (see e.g.~\cite{Karigiannis2007}*{Proposition 2.17}). We can also define an injective linear operator $\ri_{\varphi}:S^2\to\Omega^3_1\oplus\Omega^3_{27}$ as in \cite{Bryant2006} which is (up to scaling) a right inverse for $\rj_{\varphi}$, and is given locally using summation notation by
	\begin{align}
	   	\ri_{\varphi}(h)&=\frac{1}{2}h_i^l\varphi_{ljk}e^{ijk}
	   	,\label{Eq:i.operator} 
	\end{align}
	where  $h\in S^2$ is given locally by $h_{ij}e^ie^j$.  We note that $\ri_{\varphi}(g)=3\varphi$, $\rj_{\varphi}(\varphi)=6g$ and
	\begin{equation}\label{eq:i.j}
	  \rj_{\varphi}\circ\ri_{\varphi}(h)=2(\tr h)g+4h,  
	\end{equation}
	where $\tr h=g^{ij}h_{ij}$ is the trace of $h$ with respect to $g$.  For later use, we let $S^2_0$ denote the trace-free symmetric 2-tensors on $M$ and note that $\ri_{\varphi}:S^2_0\to\Omega^3_{27}$ is an isomorphism.

Given any $\gt$-structure $\varphi$, there exist unique differential forms $\tau_0\in\Omega^0$, $\tau_1\in\Omega^1$,  $\tau_2\in\Omega_{14}^2$ and $\tau_3\in\Omega^3_{27}$, using the decomposition of forms in \eqref{eq:form.decomp}, such that  (see e.g.~\cite{Bryant2006}*{Proposition 1})
\begin{align}
  	\rd\varphi 
  	&= \tau_0\psi+3\tau_1\wedge\varphi+\ast\tau_3,
	\label{Eq:Fernandez d} \\
	\rd\psi 
	&= 4\tau_1\wedge\psi+\tau_2\wedge\varphi.
	\label{eq: Fernandez dpsi}  
\end{align}
Together, the forms $\lbrace \tau_0,\tau_1,\tau_2,\tau_3 \rbrace$ are called the \emph{intrinsic torsion forms} of the $\gt$-structure $\varphi$.
The  \emph{full torsion tensor}  $T$ is defined locally by the formula
\begin{equation}
\label{Eq.nabla.varphi}
	\nabla_i\varphi_{jkl}
	=T_i^m\psi_{mjkl} 
\end{equation}
and may be expressed using the torsion forms as (see e.g.~\cite{Karigiannis2007}*{Theorem 2.27})
\begin{equation}
\label{Eq:Torsion}
	T =\frac{\tau_0}{4}g -\tau_1^{\sharp}\lrcorner\varphi -\frac{1}{2}\tau_2 -\frac{1}{4}\rj_{\varphi}(\tau_3).
\end{equation}

By \eqref{eq: Fernandez dpsi}, a $\gt$-structure is coclosed if and only if $\tau_1=0$ and $\tau_2=0$.  Hence, the full torsion tensor of a coclosed $\gt$-structure simplifies to the symmetric $2$-tensor
\begin{equation}
\label{eq:torsion.coclosed}
    T=\frac{\tau_0}{4}g -\frac{1}{4}\rj_{\varphi}(\tau_3) 
    \in S^2.    
\end{equation}

\subsection{Contact Calabi--Yau manifolds}
\label{sec: cCY manifolds}

In this section we review the properties of  contact Calabi--Yau (cCY)  7-manifolds and the natural coclosed $\gt$-structures on them. The original sources for cCY manifolds are  \cites{Tomassini2008,Habib2015}; for a more detailed exposition and study of $\gt$-geometry on cCY $7$-manifolds, see \cite{Calvo-Andrade2020}.
	
\begin{definition}
\label{dfn:cCY}
	A \emph{contact Calabi--Yau} (cCY) manifold is a quadruple   $(M^{2n+1},g,\eta,\Upsilon)$  such that
\begin{itemize}
	\item $(M,g)$ is a $(2n+1)$-dimensional Sasakian manifold with contact form $\eta$;
	
	\item\textcolor{black}{$\Upsilon$ is a  nowhere vanishing closed transversal $(n,0)$-form on $\cD=\ker\eta$,  with $\omega=\rd\eta$ and}
	\textcolor{black}{
	$$ \frac{\omega^n}{n!} = (-1)^{\frac{n(n-1)}{2}} \Big( \frac{i}{2} \Big)^n \Upsilon \wedge \overline{\Upsilon},
	\quad \rd\Upsilon=0.
	$$}
  We shall write:
	$$ 
	\Re\Upsilon:= \frac{\Upsilon+\oep}{2},\quad \Im\Upsilon:= \frac{\Upsilon-\oep}{2\ri}.
	$$  	
	\end{itemize}
\end{definition}
	
\begin{remark}
	A contact Calabi--Yau manifold $(M,g,\eta,\Upsilon)$ has a transverse Calabi--Yau geometry on $\cD=\ker\eta$, given by $g|_{\cD}$, $\omega$ and $\Upsilon$. When the Sasakian geometry is regular or quasi-regular, $M$ is an $S^1$-(orbi)bundle over a Calabi--Yau orbifold, but the Sasakian geometry can also be irregular, and then there is no $S^1$-fibration structure on $M$ compatible with the contact Calabi--Yau geometry.
\end{remark}

We now see how to relate the contact Calabi--Yau geometry in 7 dimensions to $\gt$ geometry (cf.~\cite{Habib2015}*{Corollary 6.8} and \cite{lotay2021}). 
\begin{prop}
\label{CcY G2-structure}
	Let $(M^7,g,\eta,\Upsilon)$ be a contact Calabi-Yau 7-manifold. Then $M$ carries a \textcolor{black}{ 1-parameter family} of coclosed $\gt$-structures defined by \eqref{eq:std.phi}, 
	for $\epsilon>0$, where $\omega=\rd\eta$. Furthermore,  $\rd\varphi=\varepsilon\omega\wedge\omega$ and its corresponding dual $4$-form is given by
	\eqref{eq:std.psi}.
\end{prop}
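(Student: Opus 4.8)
The plan is to reduce the proposition to a single pointwise linear-algebra statement together with two short exterior-derivative computations. At each point the transverse data $(\omega,\Upsilon)$ on $\cD$ is an $\SU(3)$-structure on a $6$-dimensional space, and the heart of the matter is to exhibit a local coframe in which $\varphi$ becomes the flat model $\varphi_0$ recorded in \S\ref{Section.coclosed}; everything else (metric, orientation, dual $4$-form) is then read off from that model, and the genuinely differential content ($\rd\varphi$ and coclosedness) follows from the cCY axioms $\rd\eta=\omega$, $\rd\omega=0$, $\rd\Upsilon=0$.

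\textbf{Step 1 (pointwise normal form).} I would choose a local coframe $e^2,\dots,e^7$ of $\cD^\ast$ that is unitary for the transverse metric $g_\cD$ and adapted to the transverse complex structure, so that $\omega=e^{23}+e^{45}+e^{67}$. The normalisation $\Upsilon\wedge\oep=c_3\,\omega^3$ in Definition \ref{dfn:cCY} is precisely what fixes the relative scale of $\Upsilon$ and $\omega$ so that, after a constant phase rotation and the bookkeeping of this constant into the coframe, $\Upsilon=(e^2+\ri e^3)\wedge(e^4+\ri e^5)\wedge(e^6+\ri e^7)$, whence $\Re\Upsilon=e^{246}-e^{257}-e^{347}-e^{356}$. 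Setting $e^1:=\epsilon\,\eta$ gives
\[
\varphi=\epsilon\,\eta\wedge\omega+\Re\Upsilon=e^1\wedge\omega+\Re\Upsilon=\varphi_0 ,
\]
so $\varphi$ is a $\gt$-structure, its induced metric is $g_\varphi=(e^1)^2+\sum_{i=2}^{7}(e^i)^2=\epsilon^2\eta^2+g_\cD$ (equal to $g$ when $\epsilon=1$), and its orientation is $\vol_\varphi=e^{1\cdots7}=\epsilon\,\eta\wedge\vol_\cD$. Reading off the Hodge dual from the standard decomposition $\ast\varphi_0=\tfrac12\omega_0^2-e^1\wedge\Im\Upsilon$ of the flat model then gives at once
\[
\psi=\ast_\varphi\varphi=\tfrac12\omega^2-\epsilon\,\eta\wedge\Im\Upsilon ,
\]
which is \eqref{eq:std.psi}.

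\textbf{Step 2 ($\rd\varphi$ and coclosedness).} For the first formula I use $\rd\eta=\omega$, $\rd\omega=\rd\rd\eta=0$ and $\rd\Upsilon=0$ to get $\rd\varphi=\epsilon\,\rd\eta\wedge\omega-\epsilon\,\eta\wedge\rd\omega+\Re(\rd\Upsilon)=\epsilon\,\omega\wedge\omega$, the asserted expression. For coclosedness, which means $\rd\psi=0$, I compute from the formula for $\psi$
\[
\rd\psi=\tfrac12\,\rd(\omega\wedge\omega)-\epsilon\,\rd(\eta\wedge\Im\Upsilon)=\rd\omega\wedge\omega-\epsilon\,\omega\wedge\Im\Upsilon ,
\]
using $\rd(\Im\Upsilon)=\Im(\rd\Upsilon)=0$. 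The first term vanishes since $\rd\omega=0$; the second vanishes because $\omega\wedge\Im\Upsilon=\tfrac{1}{2\ri}(\omega\wedge\Upsilon-\omega\wedge\oep)$, and $\omega\wedge\Upsilon$, $\omega\wedge\oep$ are transverse forms of bidegree $(4,1)$ and $(1,4)$ on the $3$-complex-dimensional distribution $\cD$, hence identically zero. Thus $\rd\psi=0$ and $\varphi$ is coclosed.

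I expect the main obstacle to be Step 1: the careful convention-matching that identifies $\varphi$ with $\varphi_0$ and $\psi$ with its dual in an adapted coframe, since this is where the $\gt$/$\SU(3)$ orientation conventions, the normalisation constant $c_3$, and the $\epsilon$-scaling of the Reeb direction must all be lined up simultaneously. This pointwise content is essentially the linear algebra recorded in \cite{Habib2015} and \cite{Calvo-Andrade2020}, so in practice I would either cite it or verify the single identity $\varphi_0=e^1\wedge\omega_0+\Re\Upsilon$ together with its Hodge dual directly on $\R\oplus\C^3$. By contrast, Steps 2 is routine once $\rd\eta=\omega$, $\rd\omega=0$, $\rd\Upsilon=0$ and the transverse bidegree bound on $\cD$ are in hand.
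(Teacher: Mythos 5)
Your proof is correct and follows essentially the paper's own route: the paper delegates your Step 1 (the pointwise identification of $\varphi$ with the flat model in an adapted unitary coframe, with metric, orientation and dual $4$-form read off from it) to the cited references, and its only written argument is exactly your Step 2 observation that $\rd\varphi=\epsilon\,\omega\wedge\omega$ while $\rd\psi=0$ because $\omega$ and $\Upsilon$ are closed and $\omega\wedge\Upsilon=0$ by transverse bidegree. One caveat worth recording: with $c_3=-\ri$ as printed in Definition~\ref{dfn:cCY}, the flat model $\Upsilon=(e^2+\ri e^3)\wedge(e^4+\ri e^5)\wedge(e^6+\ri e^7)$ actually satisfies $\Upsilon\wedge\oep=-8\ri\,\vol_{\cD}=-\tfrac{4\ri}{3}\,\omega^3$, off from $c_3\omega^3$ by the combinatorial factor $2^3/3!$, and since only a phase of $\Upsilon$ (not its norm) can be absorbed into an $\omega$-compatible unitary coframe, your Step 1 is valid for the normalisation the paper actually uses in \eqref{eq:vol.D} --- which is plainly the intended one, as the stated formulas for $g_\varphi$ and $\psi$ fail under the literal constant --- rather than for $c_3$ as printed.
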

	
Notice that $\psi$ is manifestly closed, as $\omega$ and $\Upsilon$ are closed and $\omega\wedge\Upsilon=0$ by bidegree considerations, i.e.~$\varphi$ is coclosed, and its torsion is encoded in
$
\rd\varphi=\epsilon\omega\wedge\omega.
$ Various other properties of the $\epsilon$-family \eqref{eq:std.phi} are studied in \cite{lotay2021}.

We now provide a concrete means for finding examples of contact Calabi--Yau 7-manifolds, which are circle (orbi)bundles over Calabi--Yau 3-orbifolds.

\begin{example}[Calabi--Yau links]
\label{ex: CY links}
    Given $w=(w_0,\dots,w_4)\in\Q^{5}$, the zero set of a $w$-weighted homogeneous polynomial $f\in\C[z_{0}, \dots, z_{4}]$ of degree $d=\sum_{i=0}^4 w_i$ is an affine hypersurface in $\C^5$ with an isolated singularity at $0$. 
\begin{multicols}{2}
    Its link $M_f$ on a \textcolor{black}{(sufficiently small)} $9$-sphere is a compact and $2$-connected smooth cCY $7$-manifold, fibering by circles over a Calabi--Yau $3$-orbifold $Z\subseteq \P^4(w)$ by the weighted Hopf fibration \cite{Calvo-Andrade2020}*{Theorem 1.1}:
\columnbreak
    \[\begin{tikzcd}        
    M_{f}^7 \arrow[hook]{r} \arrow{d}& S^{9} \arrow{d}\\
        Z \arrow[hook]{r}& \P^{4}(w)
    \end{tikzcd}\]
\end{multicols}
In particular, $Z$ can be taken to be any of the weighted Calabi--Yau $3$-folds listed in \cite{Candelas1990}. For a detailed survey on Calabi--Yau links, see \cite{Calvo-Andrade2020}*{\S2}.
The $\C$-family of Fermat quintics yields the simplest examples, and indeed the only ones for which the base $Z$ is smooth.
\end{example}

\begin{remark}
\label{rem: optimist topology}
    Regarding the possibility of a torsion-free $\gt$-structure on a Calabi--Yau link, we observe a number of favourable topological circumstances, in the terms of \cite{Joyce2000}*{\S10.2}. 
    
    Since it admits natural $\gt$-structures such as in \eqref{eq: standard setup}, every cCY $7$-manifold is obviously orientable and spin.
    In particular, CY links are $2$-connected, so trivially $\pi_1(M_f)$ is finite and there is no obstruction coming from an intersection form on $H^2(M_f)$. Moreover, CY links are compact, so if a torsion-free $\gt$-structure exists at all, then its induced Riemannian metric will have holonomy group precisely $\gt$.
    Finally, since every such CY link $M_f\to Z\subseteq \P^4(w)$ admits solutions to the heterotic Bianchi identity \cite{lotay2021}*{Theorem 1}, 
    its first Pontryagin class $p_1(M_f)$ coincides with the pullback of the second Chern class $c_2(Z)$ of the weighted projective $3$-orbifold, which is certainly not trivial in general, e.g.~for the Fermat quintic itself.
\end{remark}

\section{The Laplacian coflow solution}
\label{Section.Laplacian.coflow}
	
In this section we solve the Laplacian coflow \eqref{eq: Laplacian.coflow} of $\gt$-structures on a contact Calabi--Yau 7-manifold by an explicit ansatz, and we study the behaviour of the metric and torsion along the flow.

\subsection{Solving the flow}

We want to consider the Laplacian coflow starting at the natural coclosed $\gt$-structure defined by \eqref{eq:std.phi} and \eqref{eq:std.psi} on a cCY setup \eqref{eq: standard setup}:
\begin{equation}
\label{eq: phi0 and psi0} 
    \varphi_0=\epsilon\eta_0\wedge\omega_0+\Re\Upsilon_0\qandq
	\psi_0=\frac{1}{2}\omega_0^2-\epsilon\eta_0\wedge\Im\Upsilon_0.
\end{equation}

To this end, we consider the family of $\gt$-structures given by
\begin{equation}
\label{Eq.fluxo1}
	\varphi_t = f_th_t^2\eta_0\wedge \omega_0+h_t^3\Re \Upsilon_0,
\end{equation}
	for functions $f_t,h_t$ depending only on time, with 
\begin{equation}
\label{eq:init.conds.fh}
	f_0=\epsilon
	\qandq h_0=1.
\end{equation}
The induced metric and associated volume form are then given by 
\begin{equation}
\label{eq: metric.vol.t}
    g_t=f_t^2\eta_0^2+h_t^2g_{\cD_0}
    \qandq
    \vol_t=f_th_t^6\eta_0\wedge\vol_{\cD_0},
\end{equation}
with  \textcolor{black}{
\begin{equation}
\label{eq:vol.D}	
	\vol_{\cD_0}=\frac{1}{3!}\omega_0^3=\frac{i}{8}\Upsilon_0\wedge\oep_0=\frac{1}{4}\Re \Upsilon_0\wedge\Im\oep_0.
	\end{equation}}
It follows from \eqref{Eq.fluxo1}, \eqref{eq: metric.vol.t}, and \eqref{eq:vol.D} that
\begin{equation}
\label{eq.fluxo2}
	\psi_t =\frac{1}{2}h_t^4\omega_0^2-f_th_t^3\eta_0\wedge\Im\Upsilon_0.
\end{equation}
We observe that \eqref{eq: phi0 and psi0} is indeed the instance at $t=0$ of \eqref{Eq.fluxo1} and \eqref{eq.fluxo2}. 	
	 
\begin{lemma}
\label{lem:torsion.t}
    Let $\varphi_t$ be a $\gt$-structure as in \eqref{Eq.fluxo1}.  Then we have
	\begin{equation}
	\label{eq:star.d.phi.t}
	    \rd\varphi_t=f_th_t^2\omega_0^2,\qquad\rd\psi_t=0\qandq  \ast_t\rd\varphi_t=2f_t^2\eta_0\wedge\omega_0.
	\end{equation}
	Hence, the torsion forms of $\varphi_t$ as in \eqref{Eq:Fernandez d} are 
	\begin{equation}
	\label{eq:torsion.t}
	    (\tau_0)_t= \frac{6f_t}{7h_t^2},\quad (\tau_1)_t=0,\quad (\tau_2)_t=0,\quad (\tau_3)_t=\frac{8}{7}f_t^2\eta_0\wedge\omega_0-\frac{6}{7}f_th_t\Re\Upsilon_0.	    
	\end{equation}
\end{lemma}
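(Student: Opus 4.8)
The plan is to read off the three identities in \eqref{eq:star.d.phi.t} by direct computation, and then feed them into the Fernández--Gray equations \eqref{Eq:Fernandez d}--\eqref{eq: Fernandez dpsi} to isolate the four torsion forms.

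First, the exterior derivatives. Since $f_t$ and $h_t$ depend only on $t$, they are constant along $M$, so $\rd$ ignores them. Using $\omega_0=\rd\eta_0$, $\rd\omega_0=\rd\rd\eta_0=0$ and $\rd\Upsilon_0=0$, I compute $\rd(\eta_0\wedge\omega_0)=\omega_0^2$ and $\rd(\Re\Upsilon_0)=0$, so \eqref{Eq.fluxo1} gives $\rd\varphi_t=f_th_t^2\omega_0^2$ at once. For $\rd\psi_t$, the same constancy together with $\rd(\omega_0^2)=0$ and $\rd(\eta_0\wedge\Im\Upsilon_0)=\omega_0\wedge\Im\Upsilon_0=0$ (the latter vanishing by the bidegree argument already noted after Proposition \ref{CcY G2-structure}) yields $\rd\psi_t=0$ from \eqref{eq.fluxo2}.

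The only genuinely delicate computation is $\ast_t\rd\varphi_t=f_th_t^2\ast_t(\omega_0^2)$, because the metric $g_t=f_t^2\eta_0^2+h_t^2 g_{\cD_0}$ in \eqref{eq: metric.vol.t} scales the Reeb and horizontal directions by different factors, so one cannot simply invoke the transverse Calabi--Yau Hodge star. I would work in an adapted orthonormal coframe $(e^0,e^1,\dots,e^6)$ with $e^0=f_t\eta_0$ and $e^i=h_t\bar e^i$, where $\bar e^i$ is orthonormal for $g_{\cD_0}$ and $\omega_0=\bar e^{12}+\bar e^{34}+\bar e^{56}$; then $\vol_t=e^0\wedge\cdots\wedge e^6$. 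Rewriting $\omega_0^2=\tfrac{2}{h_t^4}(e^{1234}+e^{1256}+e^{3456})$ and applying the orthonormal-frame Hodge star termwise (each horizontal $4$-form pairs with the complementary $e^0$-containing $3$-form) collapses to $\ast_t(\omega_0^2)=\tfrac{2f_t}{h_t^2}\eta_0\wedge\omega_0$, whence $\ast_t\rd\varphi_t=2f_t^2\eta_0\wedge\omega_0$. This Hodge-star bookkeeping, with its signs and its two distinct conformal factors, is the main obstacle; everything else is mechanical.

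Finally, I extract the torsion forms. Since $\rd\psi_t=0$, the coclosedness criterion following from \eqref{eq: Fernandez dpsi} forces $(\tau_1)_t=(\tau_2)_t=0$, so \eqref{Eq:Fernandez d} reduces to $\rd\varphi_t=(\tau_0)_t\psi_t+\ast_t(\tau_3)_t$. Wedging with $\varphi_t$ and using $\psi_t\wedge\varphi_t=7\vol_t$ together with the orthogonality $\langle\ast_t(\tau_3)_t,\psi_t\rangle=0$ (as $\ast_t(\tau_3)_t\in\Omega^4_{27}$ while $\psi_t\in\Omega^4_1$) isolates $(\tau_0)_t$: a short computation of $\rd\varphi_t\wedge\varphi_t$, in which the term $\omega_0^2\wedge\Re\Upsilon_0$ vanishes by bidegree and $\omega_0^3=6\vol_{\cD_0}$ by \eqref{eq:vol.D}, gives $\rd\varphi_t\wedge\varphi_t=\tfrac{6f_t}{h_t^2}\vol_t$ and hence $(\tau_0)_t=\tfrac{6f_t}{7h_t^2}$. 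For $(\tau_3)_t$, I apply $\ast_t$ to the reduced equation and use $\ast_t\psi_t=\varphi_t$ to obtain $(\tau_3)_t=\ast_t\rd\varphi_t-(\tau_0)_t\varphi_t$; substituting the value of $\ast_t\rd\varphi_t$ above and $\varphi_t$ from \eqref{Eq.fluxo1}, the coefficient of $\eta_0\wedge\omega_0$ works out to $2-\tfrac{6}{7}=\tfrac{8}{7}$, yielding the stated $(\tau_3)_t=\tfrac{8}{7}f_t^2\eta_0\wedge\omega_0-\tfrac{6}{7}f_th_t\Re\Upsilon_0$. The torsion extraction is routine linear algebra on the $\gt$-representation components once the anisotropic Hodge star is in hand.
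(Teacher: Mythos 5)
Your proposal is correct and follows essentially the same route as the paper: compute $\rd\varphi_t$, $\rd\psi_t$ and $\ast_t\rd\varphi_t$ directly, then read off $(\tau_0)_t$ from $\frac{1}{7}\ast_t(\varphi_t\wedge\rd\varphi_t)$ and $(\tau_3)_t=\ast_t\rd\varphi_t-(\tau_0)_t\varphi_t$, with $(\tau_1)_t=(\tau_2)_t=0$ forced by coclosedness via \eqref{eq: Fernandez dpsi}. The only cosmetic difference is in the Hodge-star step: the paper rewrites $f_th_t^2\omega_0^2$ as a multiple of the summand $\frac{1}{2}h_t^4\omega_0^2$ of $\psi_t$ and uses $\ast_t\psi_t=\varphi_t$ componentwise, whereas your adapted-coframe computation verifies the same identity $\ast_t(\omega_0^2)=\frac{2f_t}{h_t^2}\eta_0\wedge\omega_0$ from scratch, which is if anything more self-contained.
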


\begin{proof}
From \eqref{Eq.fluxo1}, \eqref{eq.fluxo2}, and Definition \ref{dfn:cCY}, we easily see that
	\begin{equation}\label{eq:d.phi.t}
	    \rd\varphi_t=f_th_t^2\omega_0^2\qandq \rd\psi_t=0,
	\end{equation}
	since $\rd\eta_0=\omega_0$ and $\omega_0\wedge\Upsilon_0=0$.  	 We also note from \eqref{eq: metric.vol.t} and \eqref{eq:d.phi.t} that
\begin{equation*} 
    \begin{split}
	 \ast_t\rd\varphi_t &=\ast_t(f_th_t^2\omega_0\wedge\omega_0)=\ast_t(2f_th_t^{-2}\cdot \textstyle\frac{1}{2}h_t^4\omega_0^2) 	=2f_th_t^{-2}\cdot f_th_t^2\eta_0\wedge\omega_0\\
&= 2f_t^2\eta_0\wedge\omega_0,
\end{split}
	\end{equation*}
	which then yields \eqref{eq:star.d.phi.t}.
	
We can now compute the torsion forms of $\varphi_t$  as follows. First,
\begin{equation}
\label{Eq:Tosion.t1}
	    \begin{split}
	        	(\tau_0)_t &= \frac{1}{7}\ast_t(\varphi_t\wedge\rd\varphi_t)= \frac{1}{7}\ast_t\big((f_th_ t^2\eta_0\wedge\omega_0+h_t^3\Re \Upsilon_0)\wedge f_th_t^2\omega_0^2\big)\\
	&=\frac{1}{7}\ast_t f_t^2h_t^4\eta_0\wedge\omega_0^3= \frac{6f_t}{7h_t^2}.
	    \end{split}
\end{equation}
	We know that $(\tau_1)_t=0$ and $(\tau_2)_t=0$ since $\varphi_t$ is coclosed.
	Furthermore, from \eqref{eq:d.phi.t} and \eqref{Eq:Tosion.t1}, we have
\begin{equation*}
	    \begin{split}
	      (\tau_3)_t&= \ast_t\rd\varphi_t-(\tau_0)_t\varphi_t\\
	&= 2f_t^2\eta_0\wedge\omega_0-\frac{6f_t}{7h_t^2}(f_th_t^2\eta_0\wedge\omega_0+h_t^3\Re\Upsilon_0)\\
	&= \frac{8}{7}f_t^2\eta_0\wedge\omega_0-\frac{6}{7}f_th_t\Re\Upsilon_0.
 	    \end{split}
\end{equation*}
Equation \eqref{eq:torsion.t} then follows.
\end{proof}

We can now prove Theorem \ref{thm:Laplacian.coflow.cCY}, by finding $f_t,h_t$ so that \eqref{eq.fluxo2} is a solution of the Laplacian coflow \eqref{eq: Laplacian.coflow}. 

	\begin{proof}[Proof of Theorem \ref{thm:Laplacian.coflow.cCY}]
	On the cCY setup \eqref{eq: standard setup}, consider the family of $\gt$-structures given by \eqref{Eq.fluxo1} and \eqref{eq.fluxo2}. We may then compute the Laplacian of $\psi_t$ using \eqref{eq:d.phi.t}: 
\begin{equation}
\label{Laplaciano1}
\begin{split}
    \Delta_t\psi_t & =  \rd\ast_t\rd\varphi_t
	= \rd (2f_t^2\eta_0\wedge\omega_0)=2f_t^2\omega_0\wedge\omega_0.     
	    \end{split}
\end{equation}
	Differentiating \eqref{eq.fluxo2} with respect to $t$ and using \eqref{Laplaciano1}, we can compute $\frac{\partial \psi}{\partial t}=\Delta_t\psi_t$ and then equate the coefficients of $\eta_0\wedge\Im\Upsilon_0$ and  $\omega_0^2$ (since the flow preserves the class of \eqref{eq.fluxo2}) to obtain
	\textcolor{black}{
	\begin{equation}\label{eq.Laplacian1}
	\frac{d}{d t}(h_t^4)=4f_t^2,\quad\frac{d}{d t }(f_th_t^3)=0.
	\end{equation}}
	Therefore, from   \eqref{eq:init.conds.fh} and the second equation in \eqref{eq.Laplacian1} we conclude that
	\begin{equation}\label{eq:fh-3}
	f_t=\varepsilon h_t^{-3}.
	\end{equation}
	Substituting \eqref{eq:fh-3} into \eqref{eq.Laplacian1}, we have  
	\begin{equation}\label{eq:h.ode}
	\frac{d}{d t}(h_t^4)=4\epsilon^2 h_t^{-6}.
	\end{equation}
	The ODE \eqref{eq:h.ode} can be easily solved and so, together with \eqref{eq:init.conds.fh} and \eqref{eq:fh-3}, we find that
	\begin{align}\label{Eq: solution.ODE}
 f_t = \epsilon(1+10\epsilon^2 t)^{-3/10}\qandq 	h_t = (1+10\epsilon^2 t)^{1/10}. 
	\end{align}
	
	In conclusion, we have found a solution to the Laplacian coflow \eqref{eq: Laplacian.coflow} in the cCY setup \eqref{eq: standard setup}, with initial condition \eqref{eq: phi0 and psi0}, which induces the following family of $\gt$-structures and their duals, metrics and volume forms:
\begin{align}
     \varphi_t&=\epsilon(1+10\epsilon^2t)^{-1/10}\eta_0\wedge\omega_0+(1+10\epsilon^2t)^{3/10}\Re\Upsilon_0;
\label{eq:simple.sol.1}\\
	\psi_t&=\frac{1}{2}(1+10\epsilon^2t)^{2/5}\omega_0^2-\epsilon\eta_0\wedge\Im\Upsilon_0;
\label{eq:simple.sol.2}\\
	g_t&=\epsilon^2(1+10\epsilon^2t)^{-3/5}\eta_0^2+(1+10\epsilon^2t)^{1/5}g_{\mathcal{D}_0};
\label{eq:simple.sol.3}\\
	\vol_t&=\epsilon(1+10\epsilon^2t)^{3/10}\eta_0\wedge\vol_{\mathcal{D}_0},
\label{eq:simple.sol.4}
\end{align}
	where $\mathcal{D}_0=\ker\eta_0$, defined for all $t\in(-\frac{1}{10\epsilon^2},\infty)$.
\end{proof}

\begin{remark}	We notice that the solution to the Laplacian coflow we have obtained is \emph{immortal} (i.e.~exists for all positive time), but it is not eternal, since it fails to exist for $t\leq -\frac{1}{10\epsilon^2}$.  
\textcolor{black}{This will be the unique solution} of the form \eqref{Eq.fluxo1} satisfying \eqref{eq:init.conds.fh}, and we expect that it will be the unique solution starting at \eqref{eq: phi0 and psi0} in general, but such a general theory is lacking as mentioned in $\S$\ref{ss:L.coflow}.
\end{remark}
	
\begin{remark}	
	We see from \eqref{eq:simple.sol.4} that the volume form is strictly increasing pointwise in time.  We therefore confirm that, if $M$ is compact, then its volume is indeed strictly increasing in time, tending to infinity:
\begin{equation*}
	\text{Vol}(M,g_t) \to\infty\quad\text{as }t\to\infty.
\end{equation*}
We also notice that $\psi_t$ does indeed stay in the cohomology class $[\psi_0]$, \textcolor{black}{since $\omega_0^2$ is exact on $M$.}
	
	However, suppose the contact Calabi--Yau structure is quasi-regular, so that $M$ is an $S^1$-(orbi)bundle over a Calabi--Yau 3-orbifold $Z$. We then observe that, by \eqref{Laplaciano1},  
	$$
	\Delta_t\psi_t=2f_t^2\omega_0\wedge\omega_0,
	$$
	which is a transverse 4-form that will descend to $Z$\color{black}{ and cannot be exact on $Z$, since otherwise (as $\omega_0$ is closed) the volume form $\omega_0^3$ would be exact on the compact base $Z$}. \color{black}
 Hence, the Laplacian coflow does not induce a flow of $\SU(3)$-structures on $Z$ with forms staying in some fixed cohomology class.
	\end{remark}

	\subsection{Metric and curvature}
	It is worth studying aspects of the asymptotic behaviour of the solution to the Laplacian coflow we have found in terms of the metric geometry.  
	
	It is straightforward to see from \eqref{eq:simple.sol.3} that, as $t\to\infty$, the direction dual to $\eta_0$ on $M$ collapses whilst the transverse geometry given by $\cD_0$ expands.  On the other hand, the reverse situation occurs as $t\to-\frac{1}{10\epsilon^2}$.  
	
	\begin{example}
	For illustration, let us suppose that $M$ is compact and that the cCY structure is quasi-regular, so that $M$ is an $S^1$-(orbi)bundle over a compact Calabi--Yau 3-orbifold $Z$ with metric $g_Z$.  Then, as $t\to\infty$ the circle fibres collapse, whilst the base $Z$ expands to become $\mathbb{C}^3$ with the flat metric, since the coefficient of $g_{\mathcal{D}_0}=g_Z$ in \eqref{eq:simple.sol.3} tends to infinity.  As $t\to -\frac{1}{10\epsilon^2}$, on the other hand, the base $Z$ collapses to a point and the circles expand to become the real line $\mathbb{R}$.
	\end{example}
	
In the analysis so far we have neglected the fact that the Laplacian coflow is the gradient flow of the volume functional, and so the volume must always be increasing along the flow.  To remedy this, we may rescale the family $(M,g_t)$ so that the volume is fixed and obtain the following.  

\begin{lemma}
\label{lemma: LapCoflow_NormalisedVol}
	Let $M$ be compact. After normalising $(M,g_t)$ to a fixed volume,  the Laplacian coflow solution collapses to $\mathbb{R}$, as $t\to-\frac{1}{10\epsilon^2}$, and to $\mathbb{C}^3$ with the flat metric, as $t\to\infty$.
\end{lemma}
\begin{proof}
	Fixing the volume to be constant is equivalent to multiplying the metric by $(1+10\epsilon^2t)^{-3/35}$ by \eqref{eq:simple.sol.4}, which gives
	$$(1+10\epsilon^2t)^{-3/35}g_t=(1+10\epsilon^2t)^{-24/35}\eta_0^2+(1+10\epsilon^2t)^{4/35}g_{\mathcal{D}_0}.$$
	The result then follows.
\end{proof}

We now take the standard approach to understanding the behaviour of volume along a family of compact manifolds, by normalising the diameter \textcolor{black}{instead}. From the formula \eqref{eq:simple.sol.3} for the metric $g_t$, we immediately obtain:
\begin{lemma}
\label{lemma: LapCoflow_NormalisedDiam}
    Let $M$ be compact. After normalising $(M,g_t)$ to unit diameter,  the Laplacian coflow solution  is volume-collapsing to a 1-dimensional space $M_-$ as $t\to-\frac{1}{10\epsilon^2}$ and to a 6-dimensional space $M_+$ as $t\to\infty$.  If the Sasakian structure is quasi-regular, so that $M$ is an $S^1$-orbibundle over a Calabi--Yau 3-orbifold $Z$, then $M_-=S^1$ and $M_+=Z$.
\end{lemma}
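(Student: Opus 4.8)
The plan is to read off the two collapse limits directly from the block structure of the metric \eqref{eq:simple.sol.3}, by isolating the dominant direction in each time limit and normalising by it. Writing $s=1+10\epsilon^2t$, the metric splits orthogonally as
\[
g_t=\epsilon^2 s^{-3/5}\eta_0^2+s^{1/5}g_{\cD_0},
\]
so that lengths in the direction dual to $\eta_0$ scale like $\epsilon s^{-3/10}$, while lengths along the horizontal distribution $\cD_0$ scale like $s^{1/10}$. Since these two exponents have opposite signs, exactly one factor blows up in each limit: as $s\to 0^+$ (that is, $t\to-\tfrac{1}{10\epsilon^2}$) the $\eta_0$-direction dominates, and as $s\to\infty$ the $\cD_0$-directions dominate.

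First I would estimate the diameter. Because $g_t$ is obtained from the fixed compact metric $g_0$ by rescaling the two orthogonal pieces independently, its diameter is comparable (up to a constant depending only on $(M,g_0)$) to the larger of the two length scales, i.e.\ to $\max(\epsilon s^{-3/10},\,s^{1/10})$; in each of the two limits a single term dominates, so $\diam(M,g_t)\asymp \epsilon s^{-3/10}$ as $s\to 0^+$ and $\diam(M,g_t)\asymp s^{1/10}$ as $s\to\infty$. Normalising to unit diameter amounts to dividing $g_t$ by this squared dominant scale. In the first limit this gives
\[
\diam(M,g_t)^{-2}g_t\ \asymp\ \eta_0^2+\tfrac{1}{\epsilon^2}s^{4/5}g_{\cD_0},
\]
whose second term tends to $0$, leaving the single $\eta_0$-direction; in the second limit it gives
\[
\diam(M,g_t)^{-2}g_t\ \asymp\ \epsilon^2 s^{-4/5}\eta_0^2+g_{\cD_0},
\]
whose first term tends to $0$, leaving the six horizontal directions. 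Hence $M_-$ is $1$-dimensional and $M_+$ is $6$-dimensional. That the convergence is volume-collapsing then follows by comparing the total volume $\epsilon s^{3/10}\int_M\eta_0\wedge\vol_{\cD_0}$ from \eqref{eq:simple.sol.4} against $\diam(M,g_t)^{7}$: the normalised volume is $\asymp\epsilon^{-6}s^{12/5}\to0$ as $s\to0^+$ and $\asymp s^{-2/5}\to0$ as $s\to\infty$.

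Finally I would identify the limits in the quasi-regular case, where $\pi\co M\to Z$ is a genuine $S^1$-orbibundle and the splitting $g_t=\epsilon^2 s^{-3/5}\eta_0^2+s^{1/5}\pi^*g_Z$ is a Riemannian submersion whose circle fibres have (spatially) constant length. Collapsing the base ($s\to0^+$) then exhibits the unit-diameter spaces as circles of fixed length converging in the Gromov--Hausdorff sense to $S^1$, so $M_-=S^1$; collapsing the fibres ($s\to\infty$) exhibits them as the base converging to $(Z,g_Z)$, so $M_+=Z$. The only delicate point is this last identification of the Gromov--Hausdorff limit: one must check that the collapsing factor genuinely disappears and that the surviving factor converges to the stated limit metric, rather than merely controlling the diameter. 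In the quasi-regular setting this is the standard collapse of a circle bundle with bounded geometry; the irregular case requires working directly with the metric comparison above, and is where I expect the main (albeit essentially routine) technical work to lie.
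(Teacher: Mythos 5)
Your $t\to\infty$ half is sound and agrees with the paper (which offers no argument at all, asserting the lemma as immediate from \eqref{eq:simple.sol.3}): there the fibre scale $a=\epsilon s^{-3/10}$ is the small one, the transverse projection gives $\diam(M,g_t)\asymp s^{1/10}$, and both the normalised limit $Z$ and the volume count $\asymp s^{-2/5}\to 0$ check out. The genuine gap is in the $t\to-\frac{1}{10\epsilon^2}$ regime, and it is located exactly at your diameter estimate. The block comparison of $g_t$ with $g_1=\eta_0^2+g_{\cD_0}$ only yields $\min(a,b)\,\diam(M,g_1)\le\diam(M,g_t)\le\max(a,b)\,\diam(M,g_1)$, where $b=s^{1/10}$, and it is the \emph{minimum}, not the maximum, that captures the truth: $\cD_0=\ker\eta_0$ is a contact, hence bracket-generating, distribution, so by Chow--Rashevskii the Carnot--Carath\'eodory diameter $D_{CC}$ of $(M,\cD_0,g_{\cD_0})$ is finite, and any two points are joined by a \emph{horizontal} path whose $g_t$-length is $b$ times its $g_{\cD_0}$-length, independently of $a$. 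Hence $\diam(M,g_t)\le b\,D_{CC}\to 0$ as $s\to 0^+$, even though $a\to\infty$: a Reeb displacement of order $1$ costs only $O(b)$, via a horizontal loop whose holonomy is governed by the curvature $\omega_0=\rd\eta_0\neq 0$. This is Gromov's Berger-sphere phenomenon; inflating the Hopf fibre while shrinking the base does not produce a long circle.

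Everything you build on that estimate then fails. The unit-diameter rescaling as $s\to 0^+$ is by $b^{-2}$, not $a^{-2}$, so the normalised metrics are $\asymp(a/b)^2\eta_0^2+g_{\cD_0}$ with $a/b=\epsilon s^{-2/5}\to\infty$; these increase towards the sub-Riemannian metric, whose Gromov--Hausdorff limit is $(M,d_{CC})$ (topological dimension $7$, Hausdorff dimension $8$), not a $1$-dimensional space. With the corrected diameter, your volume ratio becomes $\vol(M,g_t)/\diam(M,g_t)^7\asymp\epsilon s^{3/10}/s^{7/10}=\epsilon s^{-2/5}\to\infty$, so the family is not even volume-collapsing in this direction under the unit-diameter normalisation. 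Likewise, your quasi-regular identification of $M_-$ rests on the claim that circle fibres of spatially constant length over a collapsing base converge to $S^1$; this is false for bundles with non-flat connection (it would force $\omega_0=0$, contradicting the contact condition), and indeed the Hopf bundle with $a$ fixed and $b\to 0$ collapses to a point. So the delicate point is not the ``essentially routine'' technical work your last paragraph anticipates, but the diameter asymptotics themselves; since the paper's own justification is the same coefficient-reading heuristic you used, your proposal faithfully reproduces the paper's reasoning but does not constitute a proof of the $M_-$ statement, and the issue deserves to be flagged rather than patched.
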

\color{black}\begin{proof}
From the formula \eqref{eq:simple.sol.3} for the metric $g_t$, we see that normalising $(M,g_t)$ to unit diameter amounts to considering the rescaled metric
\[
C(1+10\epsilon^2t)^{-1/5}g_t=C\epsilon^2(1+10\epsilon^2t)^{-4/5}+Cg_{\mathcal{D}_0}
\]
for some suitable constant $C>0$.  The result then follows.
\end{proof}\color{black}
In the light of what is known for the Laplacian flow of closed $\gt$-structures, cf.~\cite{Lotay2015}*{Theorem 8.1}, we might expect that a condition on the uniform continuity of the metrics along the Laplacian coflow of coclosed $\gt$-structures, together with a (pointwise) bound on the torsion tensor, would lead to long-time existence results.  Since no such general theory currently exists, it is therefore useful to examine the uniform continuity properties of the solution to the Laplacian coflow we have found.

\begin{lemma}
\label{prop:uniformly.continuous.metric} 
    Let $\varphi_t$ be the solution to the Laplacian coflow given by \eqref{eq:simple.sol.1}. Then the associated  metric $g_t$ is uniformly continuous (in $t$) on any compact interval contained in $(-\frac{1}{10\epsilon^2},\infty)$, but it is not uniformly continuous on $(-\frac{1}{10\epsilon^2},T)$ or $(T,\infty)$ for any $T$.
\end{lemma}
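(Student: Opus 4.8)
The plan is to reduce uniform continuity of the tensor family to that of the two time-dependent conformal factors in \eqref{eq:simple.sol.3}, and then to analyse each interval by elementary real-variable arguments. Write
\[
g_t = A(t)\,\eta_0^2 + B(t)\,g_{\cD_0},\qquad A(t)=\epsilon^2(1+10\epsilon^2 t)^{-3/5},\quad B(t)=(1+10\epsilon^2 t)^{1/5},
\]
so that $g_t$ is a $t$-dependent rescaling of the two $g_0$-orthogonal blocks, namely the Reeb line and the horizontal distribution $\cD_0=\ker\eta_0$. Since these blocks are orthogonal and the tensors $\eta_0^2$ and $g_{\cD_0}$ have constant pointwise norm, the difference of two metrics measured in the current metric satisfies
\[
\sup_M|g_{t_1}-g_{t_2}|_{g_{t_2}}^2=\Bigl(\tfrac{A(t_1)}{A(t_2)}-1\Bigr)^2+6\Bigl(\tfrac{B(t_1)}{B(t_2)}-1\Bigr)^2,
\]
so uniform continuity of $g_t$, in the sense relevant for \cite{Lotay2015}*{Theorem 8.1} (a uniform modulus valid up to the open end of the time interval), is governed by that of $\log A$ and $\log B$. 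The two elementary inputs I would isolate are: (i) a $C^1$ function with derivative bounded on an interval is Lipschitz, hence uniformly continuous; and (ii) a family uniformly continuous up to an endpoint of its interval must remain bounded, and in fact converge, as that endpoint is approached.

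For Part 1, on any compact $[t_-,t_+]\subset(-\tfrac1{10\epsilon^2},\infty)$ the quantity $1+10\epsilon^2 t$ is bounded above and bounded away from $0$, so $\log A$ and $\log B$ are smooth with bounded derivative there; by (i) they are Lipschitz, hence uniformly continuous, and therefore so is $g_t$.

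For Part 2, the interval $(-\tfrac1{10\epsilon^2},T)$ has the finite left endpoint $-\tfrac1{10\epsilon^2}$, and as $t\to(-\tfrac1{10\epsilon^2})^+$ we have $1+10\epsilon^2 t\to0^+$, whence $A(t)\to+\infty$ (equivalently $\log A(t)\to+\infty$). Thus $A$ is unbounded as the left endpoint is approached, so by (ii) it fails to be uniformly continuous there, and hence so does $g_t$. This is the standard bounded-interval mechanism applied to the diverging Reeb factor.

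Part 3 is the case I expect to be the main obstacle, precisely because the endpoint is now $t=\infty$ and the elementary criterion (ii) must be deployed against a terminal rather than a finite end. Here the relevant diverging factor is the horizontal one: $B(t)=(1+10\epsilon^2 t)^{1/5}\to\infty$ (while $A(t)\to0$) as $t\to\infty$, so no modulus of continuity can be valid up to the terminal time and $g_t$ fails to be uniformly continuous on $(T,\infty)$ in the same endpoint sense used throughout. The work lies in making this endpoint-at-infinity argument precise in parallel with Part 2: I would reparametrise the time ray so that $t=\infty$ becomes a genuine boundary point, and then invoke (ii) against the unboundedness of $B$ to conclude that no uniform modulus persists up to that boundary. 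The delicate point—fixing the topology on the space of metrics in which the divergence of $B$ obstructs uniform continuity up to $t=\infty$, while remaining consistent with the finite-endpoint treatment of Parts 1 and 2—is where I expect most of the care to be needed.
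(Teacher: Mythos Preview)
Your reduction to the scalar coefficients $A(t)=\epsilon^2(1+10\epsilon^2t)^{-3/5}$ and $B(t)=(1+10\epsilon^2t)^{1/5}$ is precisely the paper's argument: its proof is the single sentence that, since $\eta_0^2$ and $g_{\cD_0}$ are fixed, uniform continuity of $g_t$ on $I$ is equivalent to that of these two real functions on $I$, and then ``the result follows''. Your Parts~1 and~2 are correct and match this, though the passage through $\log A$, $\log B$ and the moving norm $|\cdot|_{g_{t_2}}$ is more machinery than needed; a fixed background norm and the coefficients themselves suffice.

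Your hesitation about Part~3 is justified, and the proposed fix does not work. Uniform continuity is not preserved under reparametrisation of time, so compactifying the ray so that $t=\infty$ becomes a boundary point says nothing about uniform continuity in the original variable. More seriously, in the ordinary sense both $A$ and $B$ \emph{are} uniformly continuous on $(T,\infty)$: their derivatives $-6\epsilon^4(1+10\epsilon^2t)^{-8/5}$ and $2\epsilon^2(1+10\epsilon^2t)^{-4/5}$ are bounded there, so both functions are Lipschitz. Hence the $(T,\infty)$ assertion does not actually follow from the reduction as literally stated, and the paper's own one-line proof offers no further detail on this point. What genuinely fails on $(T,\infty)$ is uniform \emph{equivalence} of the metrics---the hypothesis relevant to \cite{Lotay2015}*{Theorem 8.1}---since $A(t)\to 0$ and $B(t)\to\infty$; your observation that $\log A$ and $\log B$ are unbounded there is exactly the right diagnostic for that notion, and no reparametrisation is needed.
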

\begin{proof}
We immediately see from the formula \eqref{eq:simple.sol.3} for the metrics $g_t$ that, since the 2-tensors $\eta_0^2$ and $g_{\cD_0}$ are fixed, the uniform continuity in $t$ of $g_t$ on an interval $I$ is equivalent to the uniform continuity of both $(1+10\epsilon^2t)^{-3/5}$ and $(1+10\epsilon^2t)^{1/5}$ on $I$.  The result follows.
\end{proof}

We now study the behaviour of the Riemann curvature tensor along our Laplacian coflow solution, based on some computations of the Riemannian geometry of cCY $7$-manifolds carried out in detail in \cite{lotay2021}*{\S3.1}.

\begin{prop}
\label{prop:curvature.t}
    Let $\varphi_t$ be the solution to the Laplacian coflow given by \eqref{eq:simple.sol.1} with associated  metric $g_t$ as in \eqref{eq:simple.sol.3}. 
    Let $Rm_t$ denote the Riemann curvature tensor of $g_t$ and let $Rm_0^{\cD_0}$ denote the curvature of the transverse connection on $\cD_0$ induced by the Levi-Civita connection of $g_0$.  Then
\[
    |Rm_t|_{g_t}^2= (1+10\epsilon^2t)^{-2/5}|Rm_0^{\cD_0}|_{g_0}^2+c_0\epsilon^4 (1+10\epsilon^2t)^{-2}
\]
    for some constant $c_0>0$.
\end{prop}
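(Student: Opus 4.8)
The plan is to reduce the anisotropic family \eqref{eq:simple.sol.3} to the single-parameter family of cCY metrics whose Riemannian geometry is recorded in \cite{lotay2021}*{\S3.1}, via a spatially constant conformal rescaling. Writing $\rp=\rp(t)=(1+10\epsilon^2t)^{1/10}$ and $g^{(a)}:=a^2\eta_0^2+g_{\cD_0}$ (the cCY metric attached by \eqref{eq:std.phi} to the parameter $a>0$, for the fixed underlying cCY data), I first factor
\[
g_t=\rp^2\,g^{(\tilde\epsilon)},\qquad \tilde\epsilon:=\epsilon\,\rp^{-4},
\]
a constant conformal rescaling. Since under a homothety $g\mapsto\lambda^2 g$ the $(0,4)$-curvature scales by $\lambda^2$ while each inverse metric scales by $\lambda^{-2}$, the norm $|Rm|^2$ scales by $\lambda^{-4}$; hence
\[
|Rm_t|^2_{g_t}=\rp^{-4}\,\big|Rm^{(\tilde\epsilon)}\big|^2_{g^{(\tilde\epsilon)}},
\]
where $Rm^{(a)}$ denotes the Riemann tensor of $g^{(a)}$. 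It thus suffices to establish
\[
\big|Rm^{(a)}\big|^2_{g^{(a)}}=\big|Rm^{\cD_0}_0\big|^2_{g_0}+c_0\,a^4
\]
for a universal constant $c_0>0$: substituting $a=\tilde\epsilon=\epsilon\rp^{-4}$ and multiplying by $\rp^{-4}$, and using $\rp^{-4}=(1+10\epsilon^2t)^{-2/5}$ and $\rp^{-20}=(1+10\epsilon^2t)^{-2}$, then yields the claim.

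To compute the curvature of $g^{(a)}$ I would work in the adapted orthonormal coframe $e^0=a\eta_0$, $e^i=\bar e^i$ (with $\bar e^i$ orthonormal for $g_{\cD_0}$), whose only non-trivial structure equation is $\rd e^0=a\,\omega_0$. Solving Cartan's first equation — using that the Reeb field $\xi_0$ is Killing and that $g^{(a)}$ is a $\cD_0$-homothetic (Tanno) deformation of the Sasakian structure, hence still Sasakian — gives connection forms $\omega_{0i}=\tfrac{a}{2}(\omega_0)_{ij}e^j$ and $\omega_{ij}=\bar\omega_{ij}-\tfrac{a}{2}(\omega_0)_{ij}e^0$, with $\bar\omega_{ij}$ the transverse Levi-Civita connection. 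The curvature $2$-forms then split by index type into: a purely horizontal block $R_{ijkl}=\bar R_{ijkl}+a^2\,Q_{ijkl}$, where $\bar R=Rm^{\cD_0}$ and $Q$ is a fixed quadratic expression in $\omega_0$; a mixed block $R_{0i0j}=a^2\,P_{ij}$, again quadratic in $\omega_0$; and the one-vertical block $R_{0ijk}$, which vanishes because $R(X,Y)\xi_0=0$ for horizontal $X,Y$ on a Sasakian manifold. In particular every component is an even polynomial in $a$, of degree at most $2$.

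Expanding $|Rm^{(a)}|^2=\sum R_{\alpha\beta\gamma\delta}^2$ and grouping by powers of $a$, the $a^0$ term is $|Rm^{\cD_0}_0|^2_{g_0}$ and the $a^4$ term is $\big(|Q|^2+|P|^2\big)a^4$; I would set $c_0:=|Q|^2+|P|^2$, which is pointwise constant because $\omega_0$ has constant norm for the transverse Kähler structure, and strictly positive because the contact distribution is non-degenerate (so $P,Q\not\equiv 0$). The crux is the vanishing of the $a^2$ term: with the one-vertical block already dropped out, its only source is the horizontal cross term $2\langle Rm^{\cD_0},Q\rangle$, and since $Q$ is built from $\omega_0\otimes\omega_0$ this pairing reduces to contractions of the transverse Kähler curvature against $\omega_0$, namely the transverse Ricci form and transverse scalar curvature, both of which vanish because $g_{\cD_0}$ is Calabi--Yau (Ricci-flat Kähler). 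This cancellation — which is exactly what removes a would-be term of order $\epsilon^2(1+10\epsilon^2t)^{-6/5}$ from the final formula — is the main obstacle, and is where the transverse Calabi--Yau condition and the detailed curvature identities of \cite{lotay2021}*{\S3.1} enter decisively.
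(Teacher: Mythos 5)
Your proposal is correct, and its skeleton is exactly the paper's: the paper likewise factors $g_t=h_t^2\,\overline{g}_t$ with $\overline{g}_t=\frac{f_t^2}{h_t^2}\eta_0^2+g_{\cD_0}$ (your $\rp^2 g^{(\tilde\epsilon)}$ with $\tilde\epsilon=f_t/h_t=\epsilon\rp^{-4}$ is the same constant homothety), uses the scaling $|Rm_t|_{g_t}=h_t^{-2}|\overline{Rm}_t|_{\overline{g}_t}$, and reduces to the identity $|Rm^{(a)}|^2=|Rm_0^{\cD_0}|^2_{g_0}+c_0a^4$ for the one-parameter family. The difference is in how that identity is established: the paper treats it as a black box, citing \cite{lotay2021}*{Propositions 3.2 and 3.8} for the facts that the transverse connection (hence transverse curvature) is independent of the parameter and that the ``remaining'' curvature components scale by $a^2$, whereas you re-derive the block structure by a Cartan/canonical-variation computation and, crucially, make explicit the point the paper's citation silently absorbs — that the would-be $a^2$ cross term $2a^2\langle Rm^{\cD_0},Q\rangle$ in the norm vanishes because the relevant contractions of the transverse Kähler curvature against $\omega_0$ are the transverse Ricci form and scalar curvature, which are zero by the Calabi–Yau condition. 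Without that cancellation the paper's stated formula $|\overline{Rm}_t|^2_{\overline{g}_t}=|Rm_0^{\cD_0}|^2_{g_0}+c_0f_t^4/h_t^4$ would acquire an extra term of order $\epsilon^2(1+10\epsilon^2t)^{-6/5}$, so your version supplies justification the paper delegates to the reference; your Tanno-deformation argument for the vanishing of the one-vertical block $R_{0ijk}$ (via $R(X,Y)\xi=0$ for horizontal $X,Y$ on a Sasakian manifold, preserved under $\cD$-homothetic deformation up to homothety) is also sound, and your positivity claim $c_0>0$ is secured already by the mixed block $P_{ij}\sim\frac14\delta_{ij}\neq 0$. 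In short: same proof strategy, but self-contained where the paper cites, which buys transparency about exactly where the transverse Ricci-flatness enters, at the cost of the (routine but unverified in your sketch) connection-form and curvature-block computations that \cite{lotay2021}*{\S3.1} carries out in detail.
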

\begin{proof}
    If we write $g_t$ as in \eqref{eq: metric.vol.t} for functions $f_t,h_t$ given in \eqref{Eq: solution.ODE}, we may observe that if we let
\begin{equation}
\label{eq:overline.g.t}
    \overline{g}_t
    =\frac{f_t^2}{h_t^2}\eta_0^2+g_{\cD_0}
\end{equation}
    then $g_t=h_t^2\overline{g}_t$.  Hence, if $\overline{Rm}_t$ is the Riemann curvature tensor of $\overline{g}_t$ in \eqref{eq:overline.g.t}, we deduce that 
\begin{equation}
\label{eq:rescaling.Rm}
    |Rm_t|_{g_t}
    =\frac{1}{h_t^2}|\overline{Rm}_t|_{\overline{g}_t}.
\end{equation}
    It is a direct consequence of \cite{lotay2021}*{Proposition 3.2} that the transverse connection on $\cD_0$ induced by $\overline{g}_t$ is the same as for $g_0$, and hence its curvatures are independent of $t$.  It follows from \cite{lotay2021}*{Proposition 3.8} that the remaining components of the Riemann curvature of $\overline{g}_t$ scale by $f_t^2/h_t^2$ in comparison to the curvature of the standard Sasakian metric $g$.  We deduce that
\begin{equation}
\label{eq:overline.Rm.t}
    |\overline{Rm}_t|^2_{\overline{g}_t}=|Rm_0^{\cD_0}|_{g_0}^2+\frac{c_0f_t^4}{h_t^4}
\end{equation}
    for some constant $c_0>0$.  The assertion now follows from \eqref{Eq: solution.ODE}, \eqref{eq:rescaling.Rm}, and \eqref{eq:overline.Rm.t}. 
\end{proof}

\begin{remark}
\label{rmk:curv.t}
    We see from Proposition \ref{prop:curvature.t} that the curvature of the metric $g_t$ given by the solution to the Laplacian coflow decays to $0$ as $t\to\infty$ but blows up as $t$ approaches the finite negative time singularity at $-\frac{1}{10\epsilon^2}$.  However, if we consider $t|Rm_t|_{g_t}$, this will tend to infinity as $t\to\infty$ unless the curvature $Rm^{\cD_0}_0$ of the transverse connection is zero.  However, we see that $(1+10\epsilon^2t)|Rm_t|_{g_t}$ will remain bounded as $t\to-\frac{1}{10\epsilon^2}$.  We will return to these observations later.
\end{remark}


\subsection{Full torsion tensor}  We now wish to analyse the behaviour of the torsion along our solution to the Laplacian coflow.  We begin by writing down the full torsion tensor for the solution.

\begin{prop}
\label{prop:full.torsion.t}
    The full torsion tensor $T_t$ of the solution to the Laplacian coflow in \eqref{eq:simple.sol.1}--\eqref{eq:simple.sol.4} is
\begin{equation}\label{eq:full.torsion.t}
\begin{split}
    T_t&=\frac{1}{4}(\tau_0)_tg_t-\frac{1}{4}\rj_{\varphi_t}\big((\tau_3)_t\big)\\
    &=\frac{3\epsilon}{14}(1+10\epsilon^2t)^{-1/2}g_t\\
    &\quad-\frac{1}{14}\rj_{\varphi_t}\left(
    4\epsilon^2(1+10\epsilon^2t)^{-3/5}\eta_0\wedge\omega_0-3\epsilon(1+10\epsilon^2t)^{-1/5}\Re\Upsilon_0\right)\\
    &=-\frac{3}{2}\epsilon^3(1+10\epsilon^2t)^{-11/10}\eta_0^2+\frac{1}{2}\epsilon(1+10\epsilon^2t)^{-3/10}g_{\cD_0}. 
    \end{split}
\end{equation}
\end{prop}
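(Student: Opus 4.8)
The plan is to begin from the torsion formula for coclosed $\gt$-structures. Lemma \ref{lem:torsion.t} already records that $\varphi_t$ is coclosed, with $(\tau_1)_t=(\tau_2)_t=0$, so \eqref{eq:torsion.coclosed} applies verbatim and gives the first displayed line, $T_t=\tfrac14(\tau_0)_t g_t-\tfrac14\rj_{\varphi_t}((\tau_3)_t)$. Feeding in the explicit torsion forms \eqref{eq:torsion.t} together with the solution functions $f_t=\epsilon(1+10\epsilon^2t)^{-3/10}$ and $h_t=(1+10\epsilon^2t)^{1/10}$ from \eqref{Eq: solution.ODE} immediately yields $(\tau_0)_t=\tfrac{6\epsilon}{7}(1+10\epsilon^2t)^{-1/2}$ and rewrites $(\tau_3)_t$ in the bracketed form of the middle line; this step is pure bookkeeping of exponents.

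The real content is the evaluation of $\rj_{\varphi_t}((\tau_3)_t)$. Since $\rj_{\varphi_t}$ is linear and $(\tau_3)_t$ is a combination of $\eta_0\wedge\omega_0$ and $\Re\Upsilon_0$, it suffices to compute $\rj_{\varphi_t}(\eta_0\wedge\omega_0)$ and $\rj_{\varphi_t}(\Re\Upsilon_0)$. I would work in a local orthonormal coframe adapted to $g_t$, setting $e^7=f_t\eta_0$ and $e^i=h_t\tilde e^i$ for an orthonormal coframe $\{\tilde e^i\}_{i=1}^6$ of $g_{\cD_0}$ compatible with the transverse Calabi--Yau structure, so that $\omega_0=\tilde e^{12}+\tilde e^{34}+\tilde e^{56}$, $\Re\Upsilon_0=\tilde e^{135}-\tilde e^{146}-\tilde e^{236}-\tilde e^{245}$, and $\varphi_t$ becomes the flat model $\gt$-form. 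Writing $\alpha:=f_th_t^2\,\eta_0\wedge\omega_0$ and $\beta:=h_t^3\,\Re\Upsilon_0$, so $\varphi_t=\alpha+\beta$, reduces the task to the two symmetric tensors $\rj_{\varphi_t}(\alpha)$ and $\rj_{\varphi_t}(\beta)$.

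To avoid a full component calculation I would use two reductions. First, the identity $\rj_{\varphi_t}(\varphi_t)=6g_t$ gives $\rj_{\varphi_t}(\alpha)+\rj_{\varphi_t}(\beta)=6g_t$, so only one of them is needed. Second, $\alpha$, $\beta$ and $\varphi_t$ are all invariant under the transverse $\SU(3)$, which acts irreducibly on the six horizontal directions; hence each of $\rj_{\varphi_t}(\alpha)$, $\rj_{\varphi_t}(\beta)$ must be diagonal of the shape $A(e^7)^2+B\sum_{i=1}^6(e^i)^2$. The entry $\rj_{\varphi_t}(\beta)(e_7,e_7)$ vanishes for free: $e_7\lrcorner\varphi_t$ is a purely horizontal $2$-form, so $(e_7\lrcorner\varphi_t)\wedge(e_7\lrcorner\varphi_t)\wedge\beta$ is a $7$-form in only the six horizontal coframe elements and is zero. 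A single genuine entry, say $\rj_{\varphi_t}(\beta)(e_1,e_1)=4$ computed from $e_1\lrcorner\varphi_t=e^{27}+e^{35}-e^{46}$ via \eqref{Eq:j.operator}, then fixes $\rj_{\varphi_t}(\beta)=4\sum_{i=1}^6(e^i)^2$ and, by the constraint, $\rj_{\varphi_t}(\alpha)=6(e^7)^2+2\sum_{i=1}^6(e^i)^2$. Converting back through $(e^7)^2=f_t^2\eta_0^2$ and $\sum_{i=1}^6(e^i)^2=h_t^2g_{\cD_0}$ gives $\rj_{\varphi_t}(\eta_0\wedge\omega_0)=\tfrac{6f_t}{h_t^2}\eta_0^2+\tfrac{2}{f_t}g_{\cD_0}$ and $\rj_{\varphi_t}(\Re\Upsilon_0)=\tfrac{4}{h_t}g_{\cD_0}$.

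To finish, I would substitute these into $\rj_{\varphi_t}((\tau_3)_t)=\tfrac87 f_t^2\,\rj_{\varphi_t}(\eta_0\wedge\omega_0)-\tfrac67 f_th_t\,\rj_{\varphi_t}(\Re\Upsilon_0)$, obtaining $\tfrac{48f_t^3}{7h_t^2}\eta_0^2-\tfrac{8f_t}{7}g_{\cD_0}$, and then combine with $\tfrac14(\tau_0)_t g_t$. Simplifying the exponents via $f_t^3/h_t^2=\epsilon^3(1+10\epsilon^2t)^{-11/10}$ and $f_t=\epsilon(1+10\epsilon^2t)^{-3/10}$ collapses the $\eta_0^2$ coefficient to $-\tfrac32\epsilon^3(1+10\epsilon^2t)^{-11/10}$ and the $g_{\cD_0}$ coefficient to $\tfrac12\epsilon(1+10\epsilon^2t)^{-3/10}$, which is the third line. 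The main obstacle is exactly the $\rj$-computation: since $\rj_{\varphi_t}$ is built from wedge products and the Hodge star, honestly verifying even a single entry is sign- and orientation-sensitive, and the reduction via $\SU(3)$-invariance together with the trace identity $\rj_{\varphi_t}(\varphi_t)=6g_t$ is what prevents this from expanding into a $21$-component grind; keeping the orientation consistent with $\vol_t=\epsilon(1+10\epsilon^2t)^{3/10}\eta_0\wedge\vol_{\cD_0}$ is the most error-prone point.
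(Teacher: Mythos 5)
Your proposal is correct, and all of your intermediate formulae check against the paper: $e_1\lrcorner\varphi_t=e^{27}+e^{35}-e^{46}$ and $\rj_{\varphi_t}(\beta)(e_1,e_1)=4$ are right in the paper's orientation convention $\vol_t=f_th_t^6\,\eta_0\wedge\vol_{\cD_0}$, your two building blocks $\rj_{\varphi_t}(\eta_0\wedge\omega_0)=\frac{6f_t}{h_t^2}\eta_0^2+\frac{2}{f_t}g_{\cD_0}$ and $\rj_{\varphi_t}(\Re\Upsilon_0)=\frac{4}{h_t}g_{\cD_0}$ agree with \eqref{eq:j.eta.omega.1}--\eqref{eq:j.ReUpsilon.3}, and your $\rj_{\varphi_t}\big((\tau_3)_t\big)=\frac{48f_t^3}{7h_t^2}\eta_0^2-\frac{8f_t}{7}g_{\cD_0}$ is exactly \eqref{eq:j.t} after substituting \eqref{Eq: solution.ODE}; the final exponent bookkeeping also checks. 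The overall skeleton coincides with the paper's proof of Proposition \ref{prop:full.torsion.t} — first line via \eqref{eq:torsion.coclosed} and Lemma \ref{lem:torsion.t}, then evaluation of $\rj_{\varphi_t}$ on $\eta_0\wedge\omega_0$ and $\Re\Upsilon_0$, then substitution — but your method for the central $\rj$-evaluation is genuinely different. The paper computes every component directly from the definition \eqref{Eq:j.operator}, using the $\SU(3)$-structure identities \eqref{eq:SU3.structure.1}--\eqref{eq:SU3.structure.2} and evaluating $*_t$ on multiples of $\eta_0\wedge\vol_{\cD_0}$; you instead combine the trace identity $\rj_{\varphi_t}(\varphi_t)=6g_t$, pointwise $\SU(3)$-equivariance, and the free vanishing of $\rj_{\varphi_t}(\beta)(e_7,e_7)$, so that only a single sign-sensitive Hodge-star computation is needed. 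One small tightening: when you assert the invariant tensor has the shape $A(e^7)^2+B\sum_{i=1}^6(e^i)^2$, you should say explicitly that the mixed $e^7$--horizontal components vanish because $(\R^6)^*$ contains no nonzero $\SU(3)$-invariant vector, and that the invariant symmetric forms on $\R^6\cong\C^3$ are spanned by the metric alone; "acts irreducibly" by itself does not immediately deliver both facts. What each approach buys: yours minimises exposure to orientation and sign errors (one model-frame entry instead of six computations) and, as a cross-check, your derived value $\rj_{\varphi_t}(\alpha)(e_7,e_7)=6$ can be verified against the paper's direct evaluation in \eqref{eq:j.eta.omega.1}; the paper's entry-by-entry results, on the other hand, are reused verbatim in the Laplacian flow section (cf.\ \eqref{eq:j.eta.omega} and \eqref{eq:j.ReUpsilon} in the proof of Proposition \ref{prop:torsion.t.2}), so the direct computation pays for itself twice.
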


\begin{proof} This first two expressions for $T_t$ follow from \eqref{eq:torsion.coclosed}, Lemma \ref{lem:torsion.t} and \eqref{Eq: solution.ODE}.  

For the final expression, we first let $\xi_0$ be the dual vector field to $\eta_0$, we  let $X,Y\in\cD_0$ and consider $\varphi_t$ as in \eqref{Eq.fluxo1} for functions $f_t$ and $h_t$.
 We recall that, since $(\omega_0,\Upsilon_0)$ defines an $\SU(3)$-structure on $\cD_0$ we have that $\omega_0\wedge\Re\Upsilon_0=0$,
\begin{align}
(X\lrcorner\Re\Upsilon_0)\wedge(Y\lrcorner\Re\Upsilon_0)\wedge\omega_0&=2g_{\cD_0}(X,Y)\vol_{\cD_0}\label{eq:SU3.structure.1}\\
(X\lrcorner \omega_0)\wedge (Y\lrcorner \Re\Upsilon_0)\wedge\Re\Upsilon_0&=-2g_{\cD_0}(X,Y)\vol_{\cD_0}.\label{eq:SU3.structure.2}
\end{align}
\textcolor{black}{
It follows from} \eqref{Eq:j.operator}, \eqref{Eq.fluxo1}, \eqref{eq: metric.vol.t}, and \eqref{eq:SU3.structure.1} that
\begin{align}
\rj_{\varphi_t}(\eta_0\wedge\omega_0)(\xi_0,\xi_0)&=\ast_t(f_t^2h_t^4\eta_0\wedge\omega_0^3)=6\frac{f_t}{h_t^2},\label{eq:j.eta.omega.1}\\    
\rj_{\varphi_t}(\eta_0\wedge\omega_0)(\xi_0,X)&=0,\label{eq:j.eta.omega.2}\\
\rj_{\varphi_t}(\eta_0\wedge\omega_0)(X,Y)&=\ast_t(2h_t^6g_{\cD_0}(X,Y)\eta_0\wedge\vol_{\cD_0})=\frac{2}{f_t}g_{\cD_0}(X,Y).\label{eq:j.eta.omega.3}
\end{align}
Similarly, using \eqref{eq:SU3.structure.2},
\begin{align}
    \rj_{\varphi_t}(\Re\Upsilon_0)(\xi_0,\xi_0)
    &=0, \label{eq:j.ReUpsilon.1}\\
    \rj_{\varphi_t}(\Re\Upsilon_0)(\xi_0,X)
    &=0, \label{eq:j.ReUpsilon.2}\\
    \rj_{\varphi_t}(\Re\Upsilon_0)(X,Y)
    &=*_t(4f_th_t^5g_{\cD_0}(X,Y)\eta_0\wedge\vol_{\cD_0}) =\frac{4}{h_t}g_{\cD_0}(X,Y), \label{eq:j.ReUpsilon.3}
\end{align}

Applying \eqref{eq:j.eta.omega.1}--\eqref{eq:j.ReUpsilon.3} along with the particular expressions for $f_t$ and $h_t$ in \eqref{Eq: solution.ODE} then gives that
\begin{multline}\label{eq:j.t}
\rj_{\varphi_t}\left(
    4\epsilon^2(1+10\epsilon^2t)^{-3/5}\eta_0\wedge\omega_0-3\epsilon(1+10\epsilon^2t)^{-1/5}\Re\Upsilon_0\right)\\
    =24\epsilon^3(1+10\epsilon^2t)^{-11/10}\eta_0^2-4\epsilon(1+10\epsilon^2t)^{-3/10}g_{\cD_0}.
\end{multline}
Substituting \eqref{eq:j.t} into \eqref{eq:full.torsion.t} gives the claimed final expression for $T_t$.
\end{proof}

Given the description of the full torsion tensor in Proposition \ref{prop:full.torsion.t}, we can compute its norm, the norm of its gradient and its divergence as follows.

\begin{prop}
\label{prop:norm.full.torsion.t}
    Let $T_t$ be the full torsion tensor of the solution to the Laplacian coflow in \eqref{eq:simple.sol.1}--\eqref{eq:simple.sol.4}.  Then
\begin{align*}
    |T_t|_{g_t}^2&=\frac{15}{4}\epsilon^2(1+10\epsilon^2t)^{-1},\\
    |\nabla_t T_t|_{g_t}^2&=c_0\epsilon^4(1+10\epsilon^2t)^{-2},\\
    \div_tT_t&=0,
\end{align*}
    where $c_0>0$ is a constant, $\nabla_t$ is the Levi-Civita connection of $g_t$ and $\div_t$ is the divergence with respect to the metric $g_t$.
\end{prop}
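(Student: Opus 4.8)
The plan is to feed the explicit torsion tensor of Proposition~\ref{prop:full.torsion.t} into each of the three quantities and reduce everything to the two rigid building blocks $\eta_0\otimes\eta_0$ and $g_{\cD_0}$, whose behaviour is dictated by the block-diagonal metric $g_t=f_t^2\eta_0^2+h_t^2 g_{\cD_0}$ of \eqref{eq: metric.vol.t}. Writing $T_t=A_t\,\eta_0^2+B_t\,g_{\cD_0}$ with $A_t=-\tfrac32\epsilon^3\rp^{-11}$ and $B_t=\tfrac12\epsilon\rp^{-3}$ (here $\rp=\rp(t)=(1+10\epsilon^2t)^{1/10}$, so $f_t^2=\epsilon^2\rp^{-6}$ and $h_t^2=\rp^2$), the key observation is that $A_t,B_t,f_t,h_t$ depend only on $t$, so all spatial covariant derivatives act solely on $\eta_0$ and $g_{\cD_0}$.

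For $|T_t|^2_{g_t}$ I would compute in a local $g_0$-orthonormal coframe $\{\eta_0,e^1,\dots,e^6\}$ adapted to the splitting $\R\xi_0\oplus\cD_0$, where $\xi_0$ is the Reeb field dual to $\eta_0$. Because $g_t$ is block-diagonal in this splitting, one reads off $|\eta_0^2|^2_{g_t}=f_t^{-4}$ and $|g_{\cD_0}|^2_{g_t}=6\,h_t^{-4}$, while the cross pairing $\langle\eta_0^2,g_{\cD_0}\rangle_{g_t}=0$ since $\eta_0$ is vertical and $g_{\cD_0}$ horizontal. Hence $|T_t|^2_{g_t}=A_t^2 f_t^{-4}+6B_t^2 h_t^{-4}=\tfrac94\epsilon^2\rp^{-10}+\tfrac32\epsilon^2\rp^{-10}=\tfrac{15}{4}\epsilon^2(1+10\epsilon^2t)^{-1}$.

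For the divergence and the gradient I first absorb the $g_{\cD_0}$ term by writing $T_t=\tfrac{B_t}{h_t^2}g_t+C_t\,\eta_0\otimes\eta_0$, with $C_t=A_t-f_t^2 B_t h_t^{-2}=-2\epsilon^3\rp^{-11}$, using $g_{\cD_0}=h_t^{-2}(g_t-f_t^2\eta_0^2)$. Since $\nabla^{g_t}g_t=0$, both $\div_t T_t$ and $\nabla_t T_t$ are governed entirely by $\eta_0\otimes\eta_0$, and the single identity I need is $\nabla^{g_t}\eta_0=\tfrac12\omega_0$. This I would obtain by checking that the Reeb field remains Killing for the deformed metric: $\cL_{\xi_0}\eta_0=\xi_0\lrcorner\,\rd\eta_0=\xi_0\lrcorner\,\omega_0=0$ and $\cL_{\xi_0}g_{\cD_0}=0$ (inherited from the Sasakian metric $g_0$) give $\cL_{\xi_0}g_t=0$; since $f_t^2\eta_0$ is the $g_t$-dual of $\xi_0$ and $f_t^2$ is spatially constant, the Killing identity yields $\nabla^{g_t}(f_t^2\eta_0)=\tfrac12\rd(f_t^2\eta_0)=\tfrac12 f_t^2\omega_0$, i.e.\ $\nabla^{g_t}\eta_0=\tfrac12\omega_0$. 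The divergence is then immediate: the trace term is $\tfrac12\tr_{g_t}\omega_0=0$ (a symmetric inverse metric paired against an antisymmetric form) and the remaining term is $f_t^{-2}\nabla^{g_t}_{\xi_0}\eta_0=\tfrac12 f_t^{-2}(\xi_0\lrcorner\,\omega_0)=0$, so $\div_t T_t=0$. For the gradient, $\nabla^{g_t}_i(\eta_0\otimes\eta_0)_{jk}=\tfrac12(\omega_0)_{ij}(\eta_0)_k+\tfrac12(\eta_0)_j(\omega_0)_{ik}$; taking its $g_t$-norm, the mixed horizontal--vertical contractions vanish and one is left with $|\nabla^{g_t}(\eta_0^2)|^2_{g_t}=3\,f_t^{-2}h_t^{-4}$ built from $|\omega_0|^2_{g_t}$ and $|\eta_0|^2_{g_t}$, whence $|\nabla_t T_t|^2_{g_t}=C_t^2\cdot 3 f_t^{-2}h_t^{-4}=12\,\epsilon^4(1+10\epsilon^2t)^{-2}$, giving $c_0=12$.

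The main obstacle is establishing the covariant-derivative identity $\nabla^{g_t}\eta_0=\tfrac12\omega_0$: since $\nabla^{g_t}\neq\nabla^{g_0}$, the standard Sasakian formulae for $g_0$ cannot be quoted directly, and one must verify that the anisotropic rescaling $g_t=f_t^2\eta_0^2+h_t^2 g_{\cD_0}$ preserves the Killing property of the Reeb field and then extract the derivative from it. Once this is in place, the vertical/horizontal orthogonality of $\eta_0$ and $\omega_0$ (together with $\xi_0\lrcorner\,\omega_0=0$) reduces the remaining contractions to routine bookkeeping.
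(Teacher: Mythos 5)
Your proposal is correct, and it follows the same basic strategy as the paper: split $T_t$ into the rigid pieces $\eta_0^2$, $g_{\cD_0}$ (equivalently $g_t$ and $\eta_0^2$, via $g_{\cD_0}=h_t^{-2}(g_t-f_t^2\eta_0^2)$, which reproduces exactly the paper's coefficient $-2\epsilon^3(1+10\epsilon^2t)^{-11/10}$ for $\eta_0^2$), exploit the spatial constancy of the coefficients and the block-diagonal structure of $g_t$, and reduce $\div_tT_t$ and $\nabla_tT_t$ to the covariant derivative of $\eta_0$. The one genuine difference is how you handle that derivative: the paper invokes the external reference \cite{lotay2021}*{Proposition 3.2} for the statement that $\bigl((\nabla_t)_X\eta_0\bigr)^{\sharp}\in\cD_0$ with the relevant connection component scaling like $f_t/h_t$, and accordingly leaves the constant $c_0$ unspecified; you instead prove the single identity $\nabla^{g_t}\eta_0=\tfrac12\omega_0$ directly, by checking that $\xi_0$ remains Killing for the squashed metric $g_t$ (via $\cL_{\xi_0}\eta_0=0$ and $\cL_{\xi_0}g_{\cD_0}=0$) and then applying the standard antisymmetry of $\nabla\alpha$ for the dual $1$-form $\alpha=f_t^2\eta_0$ of a Killing field. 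This makes your argument self-contained and pins down the constant: your value $c_0=12$ is consistent with the paper's per-unit-direction computation (there $|(\nabla_t)_X(f_t^2\eta_0^2)|^2_{g_t}=\tfrac12 f_t^2h_t^{-4}$, giving $2\epsilon^4(1+10\epsilon^2t)^{-2}$ per horizontal direction, and summing over the six horizontal directions yields $12$), and all your intermediate contractions check out, including the vanishing of the vertical--horizontal cross terms via $\xi_0\lrcorner\,\omega_0=0$ and the full-contraction convention $(\omega_0)_{ij}(\omega_0)^{ij}=6h_t^{-4}$ behind your $|\nabla^{g_t}(\eta_0^2)|^2_{g_t}=3f_t^{-2}h_t^{-4}$. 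As a small remark, the divergence claim also follows in one line from the identity $\div T=\nabla(\tr T)$ of Proposition \ref{Prop.i operator coclosed}(b), since $\tr_{g_t}T_t=\tfrac32\epsilon(1+10\epsilon^2t)^{-1/2}$ is spatially constant; neither you nor the paper uses this shortcut, but it provides a consistency check on your decomposition.
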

\begin{proof}
\textcolor{black}{
From \eqref{eq:simple.sol.3}, we have
\begin{equation*}
    |\epsilon^2\eta_0^2|_{g_t}^2=(1+10\epsilon^2t)^{6/5}\qandq |g_{\cD_0}|^2_{g_t}=6(1+10\epsilon^2t)^{-2/5}.
\end{equation*}
We then deduce the first claimed equation:
\begin{equation*}
    |T_t|_{g_t}^2=\frac{9}{4}\epsilon^2(1+10\epsilon^2t)^{-1}+\frac{3}{2}\epsilon^2(1+10\epsilon^2t)^{-1}=\frac{15}{4}\epsilon^2(1+10\epsilon^2t)^{-1}.
\end{equation*}
On the other hand, it follows from \eqref{eq:simple.sol.3} and Proposition \ref{prop:full.torsion.t} that
\begin{equation}
\label{eq:Tt.decomp}
    T_t=-2\epsilon^3(1+10\epsilon^2t)^{-11/10}\eta_0^2+\frac{1}{2}\epsilon(1+10\epsilon^2t)^{-1/2}g_t.
\end{equation}
}
\textcolor{black}{As to the remaining claimed equations, notice that the vector field $\xi_0$, dual to $\eta_0$, generates a local symmetry of the $\gt$-structure $\varphi_t$, for each $t$, hence also of the metric $g_t$, and $| \xi_0 |_{g_t}$ is constant. In summary, 
$$
\nabla_tg_t=0
\qandq
(\nabla_t)_{\xi_0}\xi_0=0.
$$}
We deduce from \eqref{eq:Tt.decomp} that $\div_tT_t=0$ as claimed and that the only non-zero terms in $\nabla_tT_t$ arise from $(\nabla_t)_X\eta_0^2$ for $X\in\cD_0$.


\color{black}
Since $g_t$ is conformal to $\overline{g}_t$ in \label{eq:overline.g.t}  (with conformal factor $h_t^2$ which only depends on $t$), we may use \cite{lotay2021}*{Proposition 3.2} which gives the connection matrix for the Levi-Civita connection of $\overline{g}_t$, and hence $g_t$ (where $\epsilon=f_t/h_t$ in the notation of \cite{lotay2021}*{Proposition 3.2}).  If we let $\{e_1,Je_1,e_2,Je_2,e_3,Je_3\}$ denote a local tranverse $\mathrm{SU}(3)$ coframe, so that $g_{\mathcal{D}_0}=\sum_{j=1}^3 e_j^2+(Je_j)^2$, we see explicitly that we have the local expression
\[
\nabla_{t}\left(\frac{f_t}{h_t}\eta_0\right)=\frac{f_t}{2h_t}\sum_{j=1}^3 e_j\otimes Je_j-Je_j\otimes e_j.
\]
Therefore, if $X\in\cD_0$ such that $g_t(X,X)=1$, we  have $\big((\nabla_t)_X\eta_0\big)^{\sharp}\in\cD_0$, 
and
\begin{equation*}
    |(\nabla_t)_X(f_t^2\eta_0^2)|_{g_t}^2=\frac{c_0f_t^2}{h_t^4},
\end{equation*} 
for some constant $c_0>0$.  Here we used the fact that  
$g_t(Y,Y)=h_t^2g_{\cD_0}(Y,Y)$ for any $Y\in\cD_0$. \color{black}  Choosing $f_t$ and $h_t$ as in \eqref{Eq: solution.ODE}, we finally obtain
\begin{align*}
    |(\nabla_t)_XT_t|^2_{g_t}&=|2\epsilon(1+10\epsilon^2t)^{-1/2}(\nabla_t)_X(\epsilon^2(1+10\epsilon^2t)^{-3/5}\eta_0^2)|^2_{g_t}\\
    &=4\epsilon^2(1+10\epsilon^2t)^{-1}\cdot c_0\epsilon^2(1+10\epsilon^2t)^{-1}\\
    &=4c_0\epsilon^4(1+10\epsilon^2t)^{-2}.
    \qedhere
\end{align*}
\end{proof}

\begin{remark}
We see from Proposition \ref{prop:norm.full.torsion.t} that $|T_t|_{g_t}^2$ and $|\nabla_tT_t|_{g_t}$ have the same dependence on $t$, as we would expect from the general theory in the study of flows of $\gt$-structures, and that there is a component of $|Rm_t|_{g_t}$ with this same dependence on $t$, by Proposition \ref{prop:curvature.t}.  However, there is a component of $|Rm_t|_{g_t}$ which has a different dependence on $t$, as long as the tranverse geometry on the contact Calabi--Yau is not flat: this is unsurprising because, in particular, having small torsion does not imply that the metric is close to flat.
\end{remark}

\subsection{Singularity analysis}

We can now combine all of the estimates we have obtained to describe the infinite time singularity for the Laplacian coflow in terms of the types in Definition \ref{dfn:sing.types}, which proves Theorem \ref{thm:sings}(a).

\begin{prop}
In a cCY setup \eqref{eq: standard setup}, suppose moreover $M^7$ is compact, and let $K:=\sup_M|Rm_0^{\cD_0}|_{g_0}$. Then there is a constant $c_0>0$, independent of $\epsilon$, such that the quantity $\Lambda(t)$ in \eqref{eq:Lambda.t}, along the Laplacian coflow solution given by Theorem \ref{thm:Laplacian.coflow.cCY}, is given by
\[
\Lambda(t)=\big(K^2(1+10\epsilon^2t)^{-2/5}+c_0\epsilon^4(1+10\epsilon^2t)^{-2}\big)^{1/2}.
\]
Hence, the Laplacian coflow has a Type IIb infinite time singularity, unless $g_{\cD_0}$ is flat, in which case it has a Type III infinite time singularity.
\end{prop}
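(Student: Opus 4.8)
The plan is to assemble the quantity $\Lambda(t)$ defined in \eqref{eq:Lambda.t} from its three constituent pieces, each of which has already been computed, and then read off the singularity type by comparing the growth rates of $t\Lambda(t)$ as $t\to\infty$. First I would recall that, since $\varphi_t$ is coclosed for all $t$, the torsion tensor is the symmetric $2$-tensor given by Proposition \ref{prop:full.torsion.t}, and its relevant norms are supplied by Proposition \ref{prop:norm.full.torsion.t}, namely $|T_t|_{g_t}^2=\frac{15}{4}\epsilon^2(1+10\epsilon^2t)^{-1}$ and $|\nabla_tT_t|_{g_t}^2=c_0\epsilon^4(1+10\epsilon^2t)^{-2}$. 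The curvature norm is taken from Proposition \ref{prop:curvature.t}, which gives $|Rm_t|_{g_t}^2=(1+10\epsilon^2t)^{-2/5}|Rm_0^{\cD_0}|_{g_0}^2+c_0\epsilon^4(1+10\epsilon^2t)^{-2}$ after possibly renaming the constant $c_0$.

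The next step is to substitute these three expressions into
\[
\Lambda(x,t)^2=|Rm_t|_{g_t}^2+|T_t|_{g_t}^4+|\nabla_tT_t|_{g_t}^2
\]
and take the supremum over $x\in M$. Here I would observe that $|T_t|_{g_t}^4$ and $|\nabla_tT_t|_{g_t}^2$ are both \emph{pointwise constant} on $M$ and decay like $(1+10\epsilon^2t)^{-2}$, so they contribute only to the $\epsilon^4(1+10\epsilon^2t)^{-2}$ term. Crucially, $|T_t|_{g_t}^4=\frac{225}{16}\epsilon^4(1+10\epsilon^2t)^{-2}$ is of the same order in $t$ as $|\nabla_tT_t|_{g_t}^2$, so these combine with the corresponding piece of $|Rm_t|_{g_t}^2$ into a single term of the stated form. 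Meanwhile the supremum over $M$ affects only the curvature term $|Rm_0^{\cD_0}|_{g_0}^2$, whose supremum is by definition $K^2$. Collecting terms yields
\[
\Lambda(t)=\big(K^2(1+10\epsilon^2t)^{-2/5}+c_0\epsilon^4(1+10\epsilon^2t)^{-2}\big)^{1/2}
\]
for a new positive constant $c_0$, independent of $\epsilon$ (since the $\epsilon$-dependence has been factored out explicitly into the $\epsilon^4$ coefficient); this establishes the displayed formula.

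Finally I would analyse $t\Lambda(t)$ as $t\to\infty$, using that this is an infinite-time singularity ($T=\infty$), so Definition \ref{dfn:sing.types} distinguishes Type IIb from Type III according to whether $\sup_{t\in[0,\infty)}t\Lambda(t)$ is infinite or finite. When $g_{\cD_0}$ is not flat we have $K>0$, so the dominant term is $K^2(1+10\epsilon^2t)^{-2/5}$, giving $\Lambda(t)\sim K(10\epsilon^2t)^{-1/5}$ and hence $t\Lambda(t)\sim t^{4/5}\to\infty$: this is Type IIb. When $g_{\cD_0}$ is flat, $K=0$, only the $(1+10\epsilon^2t)^{-2}$ term survives, so $\Lambda(t)\sim\sqrt{c_0}\,\epsilon^2(10\epsilon^2t)^{-1}$ and $t\Lambda(t)$ is bounded: this is Type III. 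The one point that requires mild care---and is really the only substantive obstacle---is verifying that the constant $c_0$ can genuinely be taken independent of $\epsilon$; this follows because every $\epsilon$-power in the three input quantities has been made explicit, so the bracketed remainder is a fixed multiple of $\epsilon^4$, and likewise that the supremum over $M$ is attained and equals $K^2$ precisely because the transverse curvature factor $|Rm_0^{\cD_0}|_{g_0}^2$ is the only $x$-dependent quantity appearing.
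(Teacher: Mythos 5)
Your proposal is correct and takes essentially the same route as the paper: the formula for $\Lambda(t)$ is assembled directly from Propositions \ref{prop:curvature.t} and \ref{prop:norm.full.torsion.t} (with the supremum over $M$ landing only on the transverse curvature term, since the torsion norms are pointwise constant), and the singularity type is then read off from the behaviour of $t\Lambda(t)$ as $t\to\infty$ via Definition \ref{dfn:sing.types}. The paper's own proof is simply a more compressed version of exactly this argument, so your extra care about the $\epsilon$-independence of $c_0$ and the combination of the $(1+10\epsilon^2 t)^{-2}$ terms is a faithful expansion rather than a deviation.
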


\begin{proof}
The formula for $\Lambda(t)$ follows immediately from Proposition \ref{prop:curvature.t} and Proposition \ref{prop:norm.full.torsion.t}.  We then see that 
$t\Lambda(t)\to\infty$ as $t\to\infty$ unless $K=0$, from which the result then follows by Definition \ref{dfn:sing.types}.
\end{proof}

\begin{remark}
As remarked earlier, these are the first examples of Laplacian coflow solutions which have Type IIb singularities.  The other examples are all homogeneous, arising from nilmanifold and solvmanifold constructions, and in those cases one always has a Type III singularity if it does not converge to a flat torsion-free $\rG_2$-structure.
\end{remark}

\section{The Laplacian flow solution}
\label{Section.Laplacian.flow}

We now want to consider the Laplacian flow \eqref{eq: Laplacian.flow} 
starting at one of the natural coclosed $\gt$-structures on a contact Calabi--Yau 7-manifold. Whilst the class of coclosed $\gt$-structures is not expected to be preserved, in general, along the Laplacian flow, we will see that it is in our setting and that we can solve the flow explicitly.  By again studying the evolution of the metric and torsion along an ansatz for the flow, we will see that the behaviour is rather different from the Laplacian coflow, having a finite time singularity but existing for all negative times.

\subsection{Solving the flow}

Again, we choose the natural coclosed $\gt$-structure $\varphi_0$ on a cCY setup \eqref{eq: standard setup}, cf.~\eqref{eq: phi0 and psi0}, as the initial condition for our flow.  
We are therefore again led to consider the $\gt$-structures $\varphi_t$ in \eqref{Eq.fluxo1} depending on functions $f_t$, $h_t$ (which only depend on time) satisfying the initial conditions \eqref{eq:init.conds.fh} to prove Theorem \ref{thm:Laplacian.flow.cCY} as follows.

\begin{proof}[Proof of Theorem \ref{thm:Laplacian.flow.cCY}]
    We compute the Laplacian of $\varphi_t$ using \eqref{eq: metric.vol.t} and  \eqref{Laplaciano1}:
\begin{equation}
\label{eq:Laplacian.cCY}
\begin{split}
    \Delta_{\varphi_t}\varphi_t
    &=\ast_{\varphi_t}(\Delta_{\psi_t}\psi_t) =\ast_{\varphi_t}(2f_t^2\omega_0^2) =\ast_{\varphi_t}\left(4\frac{f_t^2}{h_t^4}\cdot\frac{h_t^4\omega_0^2}{2}\right) =4\frac{f_t^2}{h_t^4}\cdot f_th_t^2\eta_0\wedge\omega_0\\
    &=\frac{4f_t^3}{h_t^2}\eta_0\wedge\omega_0.
    \end{split}
\end{equation}
    We deduce from \eqref{Eq.fluxo1} and \eqref{eq:Laplacian.cCY} that the Laplacian flow \eqref{eq: Laplacian.flow} preserves the class of coclosed $\gt$-structures in \eqref{Eq.fluxo1}.  Moreover, the Laplacian flow is equivalent to the following pair of ODEs:
 \begin{equation}
 \label{eq:Lflow.ODEs}
    \frac{d}{dt}(f_th_t^2) =\frac{4f_t^3}{h_t^2},\quad \frac{d}{dt}(h_t^3) =0.
 \end{equation}
    We deduce from the second equation in \eqref{eq:Lflow.ODEs} and \eqref{eq:init.conds.fh} that $h_t=1$.  We therefore see that the first equation in \eqref{eq:Lflow.ODEs} becomes
\begin{equation}
\label{eq:Lflow.ODE.f}
    \frac{d}{dt}f_t =4f_t^3.
\end{equation}
    From \eqref{eq:Lflow.ODE.f} and \eqref{eq:init.conds.fh} we conclude that
\begin{equation}
\label{eq:Lflow.ODE.sol}
    f_t=\epsilon(1-8\epsilon^2t)^{-1/2}\qandq h_t=1.
\end{equation}

We conclude from \eqref{Eq.fluxo1}, \eqref{eq: metric.vol.t}, and \eqref{eq.fluxo2} that we have a solution $\varphi_t$ to the Laplacian flow starting at $\varphi_0$ in \eqref{eq: phi0 and psi0} which induces the following data:
\begin{align}
    \varphi_t&=\epsilon(1-8\epsilon^2t)^{-1/2}\eta_0\wedge\omega_0+\Re\Upsilon_0,\label{eq:Lflow.1}\\
    \psi_t&=\frac{1}{2}\omega_0^2-\epsilon(1-8\epsilon^2t)^{-1/2}\eta_0\wedge\Im\Upsilon_0,\label{eq:Lflow.2}\\
    g_t&=\epsilon^2(1-8\epsilon^2t)^{-1}\eta_0^2+g_{\cD_0},\label{eq:Lflow.3}\\
    \vol_t&=\epsilon(1-8\epsilon^2t)^{-1/2}\eta_0\wedge\vol_{\cD_0},\label{eq:Lflow.4}
\end{align}
for $t\in(-\infty,\frac{1}{8\epsilon^2})$.
\end{proof}

\begin{remark}
Our solution to the Laplacian flow is \emph{ancient} (i.e.~exists for all negative times), but not eternal, forming a finite-time singularity at $t=\frac{1}{8\epsilon^2}>0$.  Again, we cannot guarantee that this is the unique solution to the Laplacian flow starting at \eqref{eq: phi0 and psi0}, even if we assume $M$ is compact, because we do not have general analytic theory for the Laplacian flow when not restricted to closed $\gt$-structures.
\end{remark}

\begin{remark}
Even though the Laplacian flow no longer has the interpretation as the gradient flow of the volume functional in this setting, we observe from \eqref{eq:Lflow.4} that the volume form of $M$ in the metric $g_t$ is still pointwise strictly increasing in time, just as for the Laplacian coflow.  Moreover, if $M$ is compact, we conclude that the volume is strictly increasing in time, but now tending to infinity in \emph{finite} time:
\[
\text{Vol}(M,g_t)\to\infty\quad \text{as $t\to\frac{1}{8\epsilon^2}$.}
\]
We also notice from \eqref{eq:Lflow.3} that the cohomology class of $\psi_t$ satisfies
\[
[\psi_t]=(1-8\epsilon^2t)^{-1/2}[\psi_0]
\]
and so it is not constant, unless $\psi_0$ is exact.  Moreover, the cohomology class $[\psi_t]$ degenerates precisely at the singular time $t=\frac{1}{8\epsilon^2}$, whenever $[\psi_0]\neq 0$.
\end{remark}

\subsection{Metric and curvature}

Here, we see from \eqref{eq:Lflow.3} that the transverse geometry stays constant along the Laplacian flow, and the direction dual to $\eta_0$ in $M$ expands as $t$ increases to $\frac{1}{8\epsilon^2}$ and collapses as $t\to-\infty$.

\begin{example}
Let $M^7$ be a compact quasi-regular cCY 7-manifold, so that it is a circle bundle over a compact Calabi--Yau 3-orbifold $Z$ with metric $g_Z$.  Then, for our Laplacian flow solution, we see from \eqref{eq:Lflow.3} that the induced metric on $Z$ is fixed along the flow.  However, the circle fibres expand to infinite size (and so become copies of $\mathbb{R}$) as $t\to\frac{1}{8\epsilon^2}$, and that $M$ collapses to $Z$ as $t\to-\infty$.  
\end{example}

\textcolor{black}{In the same spirit of Lemma \ref{lemma: LapCoflow_NormalisedVol}, for}  the Laplacian coflow, we can account for the volume increasing along the flow by normalising the volume of the evolving metrics to be the same as for $g_0$, yielding 
\[
(1-8\epsilon^2t)^{1/7}g_t=\epsilon^2(1-8\epsilon^2t)^{-6/7}\eta_0^2+(1-8\epsilon^2t)^{1/7}g_{Z}.
\]
This immediately gives the following result.

\begin{lemma}
    Let $M$ be compact. After normalising the volume of $(M,g_t)$ to be fixed, the Laplacian flow solution collapses to $\mathbb{R}$ as $t\to\frac{1}{8\epsilon^2}$ and collapses to $\mathbb{C}^3$ with the flat metric as $t\to-\infty$.
\end{lemma}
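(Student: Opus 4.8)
The plan is to read off the two limiting regimes directly from the volume-normalised metric displayed immediately before the lemma, so the proof is essentially a limit computation. Since $\dim M=7$ and $\vol_t=\epsilon(1-8\epsilon^2t)^{-1/2}\eta_0\wedge\vol_{\cD_0}$ by \eqref{eq:Lflow.4}, fixing the total volume amounts to multiplying $g_t$ by the factor $(1-8\epsilon^2t)^{1/7}$ (so that the conformal weight $(1-8\epsilon^2t)^{7/14}$ cancels the volume growth), which yields the normalised metric
$$
\tilde g_t=\epsilon^2(1-8\epsilon^2t)^{-6/7}\eta_0^2+(1-8\epsilon^2t)^{1/7}g_Z.
$$
It then suffices to track the two scale factors separately in each limit.

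First I would treat $t\to\tfrac{1}{8\epsilon^2}$, where $(1-8\epsilon^2t)\to 0^+$. Here the coefficient $(1-8\epsilon^2t)^{1/7}$ of $g_Z$ tends to $0$, so the six transverse directions of $\cD_0$ collapse, while the coefficient $\epsilon^2(1-8\epsilon^2t)^{-6/7}$ of $\eta_0^2$ blows up, so the fibre direction dual to $\eta_0$ stretches to infinite length. Thus $(M,\tilde g_t)$ degenerates, in the (pointed) Gromov--Hausdorff sense, onto the $1$-dimensional limit carried by the stretched fibre; a circle of radius tending to infinity has pointed limit $\R$, which gives the claimed collapse to $\R$.

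Next I would treat $t\to-\infty$, where $(1-8\epsilon^2t)\to+\infty$ and the roles reverse: the coefficient of $\eta_0^2$ tends to $0$, so the fibre collapses, while the coefficient of $g_Z$ blows up, so the transverse geometry expands. Since $g_Z$ is a fixed Calabi--Yau (hence Ricci-flat) metric, multiplying it by an unbounded factor drives its sectional curvatures to $0$, so the rescaled transverse geometry flattens and the pointed limit at a smooth point is the flat $\C^3$ (orbifold points contributing flat quotients of it). The one point that deserves care -- and which I would flag as the only real content beyond bookkeeping -- is precisely this identification of the expanding transverse limit with flat $\C^3$: it is not that $Z$ itself becomes flat, but that the rescaling sends curvature to zero so that the smooth local model is flat $\C^3$. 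This is the same mechanism already invoked for the Laplacian coflow, so no new analytic input is required and the result follows.
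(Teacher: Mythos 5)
Your proof is correct and follows exactly the paper's route: the paper normalises by the same factor $(1-8\epsilon^2t)^{1/7}$, displays the identical metric $\epsilon^2(1-8\epsilon^2t)^{-6/7}\eta_0^2+(1-8\epsilon^2t)^{1/7}g_{Z}$, and reads off the two collapsing regimes just as you do. Your extra remark that the expanding transverse factor only becomes flat after rescaling (curvature $\to 0$, smooth local model flat $\C^3$) is a welcome clarification of a point the paper leaves implicit.
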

\textcolor{black}{On the other hand, as also done above in Lemma \ref{lemma: LapCoflow_NormalisedDiam},} we can normalise the diameter of the metric \eqref{eq:Lflow.3}, immediately obtaining:

\begin{lemma}
    Let $M$ be compact. Then, after normalising $(M,g_t)$ to unit diameter, the Laplacian flow  is volume-collapsing to a $6$-dimensional space $M_-$ as $t\to-\infty$ and to a $1$-dimensional space $M_+$ as $t\to\frac{1}{8\epsilon^2}$.  If the Sasakian structure is quasi-regular, so that $M$ is an $S^1$-orbibundle over a Calabi--Yau 3-orbifold $Z$, then $M_-=Z$ and $M_+=S^1$.
\end{lemma}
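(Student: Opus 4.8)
The plan is to read the two geometric scales directly off the metric formula \eqref{eq:Lflow.3} and track which one controls the diameter as $t$ approaches each end of the maximal interval. Writing $\rmq(t)=(1-8\epsilon^2t)^{-1/2}$ as in Theorem \ref{thm:Laplacian.flow.cCY}, the metric reads $g_t=\epsilon^2\rmq(t)^2\eta_0^2+g_{\cD_0}$, so the transverse directions in $\cD_0$ have length independent of $t$, while the length in the $\eta_0$-direction scales like $\epsilon\rmq(t)$. Since $\rmq(t)\to\infty$ as $t\to\frac{1}{8\epsilon^2}$ and $\rmq(t)\to 0$ as $t\to-\infty$, the diameter of $(M,g_t)$ is comparable (up to fixed constants) to $\max(\epsilon\rmq(t),D_0)$, where $D_0$ denotes the fixed diameter of the transverse geometry.

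First I would treat the limit $t\to\frac{1}{8\epsilon^2}$. Here $\epsilon\rmq(t)\to\infty$ dominates, so normalising to unit diameter amounts to multiplying $g_t$ by a factor $\sim(\epsilon\rmq(t))^{-2}$. Under this rescaling the $\eta_0$-direction retains unit length while the transverse directions shrink like $(\epsilon\rmq(t))^{-1}\to 0$; thus the space collapses onto the $1$-dimensional $\eta_0$-direction, which in the quasi-regular case is precisely the circle fibre $S^1=M_+$. Symmetrically, for $t\to-\infty$ one has $\epsilon\rmq(t)\to 0$, so the transverse diameter $D_0$ dominates; normalising now rescales by a bounded factor $\sim D_0^{-2}$, leaving the transverse geometry essentially fixed while the $\eta_0$-direction collapses like $\epsilon\rmq(t)\to 0$. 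The limit is then the $6$-dimensional transverse space, which in the quasi-regular case is the Calabi--Yau orbifold $Z=M_-$.

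To confirm that the collapse is genuinely \emph{volume}-collapsing in both cases, I would use \eqref{eq:Lflow.4}: the total volume satisfies $\vol(M,g_t)=\epsilon\rmq(t)\,V_0$, where $V_0=\int_M\eta_0\wedge\vol_{\cD_0}$ is a fixed positive constant. Rescaling $g_t$ to unit diameter multiplies the $7$-dimensional volume by the seventh power of the scaling factor. In the first case this gives a normalised volume $\sim(\epsilon\rmq(t))^{-7}\cdot\epsilon\rmq(t)=(\epsilon\rmq(t))^{-6}\to 0$, and in the second case $\sim D_0^{-7}\epsilon\rmq(t)\to 0$. In each case the volume tends to zero while the diameter is held fixed, so the convergence is a collapse rather than a bounded-geometry limit, matching the drop in dimension recorded above.

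The scaling bookkeeping is routine; the only real subtlety is justifying the identification of the Gromov--Hausdorff limits. In the quasi-regular case this is standard: $M$ is an $S^1$-orbibundle over $Z$ carrying a connection metric of the form $\epsilon^2\rmq(t)^2\eta_0^2+g_Z$, and the collapse of such a bundle to its base as the fibre shrinks, respectively to the fibre circle as the base shrinks relative to the fibre, is a classical Gromov--Hausdorff computation. For a general, possibly irregular, Sasakian structure there is no global fibration, so the cleanest reading of the statement is as a claim about the Hausdorff dimension of the collapsed limit, which follows directly from the scaling of the two orthogonal blocks of $g_t$ identified above. This is the single point at which some care in the formulation is warranted.
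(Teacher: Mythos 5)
Your proposal is correct and follows essentially the same route as the paper, which simply asserts that the lemma follows immediately from the metric formula \eqref{eq:Lflow.3} by normalising the diameter; you have filled in exactly the scaling bookkeeping (dominant scale $\max(\epsilon\rmq(t),D_0)$, rescaled volumes $(\epsilon\rmq(t))^{-6}$ and $D_0^{-7}\epsilon\rmq(t)$) that the paper leaves implicit. Your closing caveat about the irregular case, where no global fibration exists and the identification of the Gromov--Hausdorff limit needs care, is a fair observation that the paper itself does not address.
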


We see that there is a finite-time singularity for our flow at $t=\frac{1}{8\epsilon^2}$. 
As we know from the study of the Laplacian flow for closed $\gt$-structures, the formation of singularities should be related to the lack of uniform continuity of the evolving metrics and the blow-up of curvature, so we examine these properties for our solution.

\begin{prop}
Let $\varphi_t$ be the solution to the Laplacian flow given by \eqref{eq:Lflow.1} with associated metric $g_t$  as in \eqref{eq:Lflow.3}.  Then $g_t$ is uniformly continuous (in $t$) on $(-\infty,T]$, for any $T<\frac{1}{8\epsilon^2}$, but it is \emph{not} uniformly continuous on $[T,\frac{1}{8\epsilon^2})$, for any $T$.
\end{prop}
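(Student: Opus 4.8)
The plan is to mirror the argument of Lemma~\ref{prop:uniformly.continuous.metric}, exploiting the fact that the only time dependence in $g_t$ sits in a single scalar coefficient. From \eqref{eq:Lflow.3} we have $g_t=\epsilon^2(1-8\epsilon^2t)^{-1}\eta_0^2+g_{\cD_0}$, where the symmetric $2$-tensors $\eta_0^2$ and $g_{\cD_0}$ are fixed (independent of $t$). Hence, measuring the size of $g_t-g_s$ in any fixed background metric (say $g_0$), we obtain
\[
\|g_t-g_s\|=\epsilon^2\,\bigl|(1-8\epsilon^2t)^{-1}-(1-8\epsilon^2s)^{-1}\bigr|\,\|\eta_0^2\|,
\]
so the uniform continuity of $t\mapsto g_t$ on an interval $I$ is equivalent to the uniform continuity there of the single scalar function $\phi(t):=(1-8\epsilon^2t)^{-1}$.

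First I would treat the interval $(-\infty,T]$ with $T<\frac{1}{8\epsilon^2}$. On this interval $1-8\epsilon^2t\geq 1-8\epsilon^2T>0$, so
\[
\phi'(t)=8\epsilon^2(1-8\epsilon^2t)^{-2}\leq 8\epsilon^2(1-8\epsilon^2T)^{-2}
\]
is bounded. A function with bounded derivative is Lipschitz, hence uniformly continuous, which gives the first assertion. For the interval $[T,\frac{1}{8\epsilon^2})$ (for any $T<\frac{1}{8\epsilon^2}$, so that the interval is nonempty), I would argue by unboundedness: as $t\to(\frac{1}{8\epsilon^2})^-$ we have $1-8\epsilon^2t\to 0^+$ and so $\phi(t)\to+\infty$. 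Thus $\phi$ is unbounded on the \emph{bounded} interval $[T,\frac{1}{8\epsilon^2})$, and since any uniformly continuous function on a bounded interval is necessarily bounded, $\phi$ — and hence $g_t$ — cannot be uniformly continuous there.

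There is no genuine obstacle here: the whole content is that the metric depends on $t$ only through $(1-8\epsilon^2t)^{-1}$, after which the two claims reduce to the standard facts that a bounded derivative implies uniform continuity and that an unbounded function on a bounded interval is not uniformly continuous. The only point requiring a little care is the reduction step, namely confirming that the comparison of $g_t$ and $g_s$ is genuinely controlled by the scalar coefficient alone; this is immediate because the transverse part $g_{\cD_0}$ is exactly constant along the Laplacian flow, in contrast to the coflow case of Lemma~\ref{prop:uniformly.continuous.metric}, where \emph{both} coefficients varied and each had to be checked separately. Notably, the asymmetry between the two intervals reflects the one-sided nature of the finite-time singularity at $t=\frac{1}{8\epsilon^2}$.
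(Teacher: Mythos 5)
Your proof is correct and takes essentially the same approach as the paper: the paper's proof likewise reduces the uniform continuity of $g_t$ on an interval $I$ to that of the single scalar coefficient $(1-8\epsilon^2 t)^{-1}$ (valid since $g_{\cD_0}$ and $\eta_0^2$ are fixed), and treats the resulting one-variable facts as immediate. The details you supply --- the bounded-derivative (Lipschitz) criterion on $(-\infty,T]$ and the observation that a uniformly continuous function on the bounded interval $[T,\tfrac{1}{8\epsilon^2})$ would have to be bounded, contradicting $\phi(t)\to\infty$ --- are exactly the standard facts the paper leaves implicit.
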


\begin{proof}
This is immediate from the formula \eqref{eq:Lflow.3} for the evolving metric $g_t$, which shows that its uniform continuity on an interval $I$ is equivalent to the uniform continuity of $(1-8\epsilon^2t)^{-1}$ on $I$.
\end{proof}

Moreover, one can immediately express the norm of curvature along the flow, from the proof of Proposition \ref{prop:curvature.t} and \eqref{eq:Lflow.ODE.sol}, as follows.
\begin{prop}
\label{prop:curvature.t.2}
    Let $\varphi_t$ be the solution to the Laplacian flow given by \eqref{eq:Lflow.1} with associated  metric $g_t$ as in \eqref{eq:Lflow.3}. 
    Let $Rm_t$ denote the Riemann curvature tensor of $g_t$ and let $Rm_0^{\cD_0}$ denote the curvature of the transverse connection on $\cD_0$ induced by the Levi-Civita connection of $g_0$.  Then
\[
|Rm_t|_{g_t}^2= |Rm_0^{\cD_0}|_{g_0}^2+c_0\epsilon^4 (1-8\epsilon^2t)^{-2}
\]
    for some constant $c_0>0$.
\end{prop}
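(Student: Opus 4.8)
The plan is to deduce this directly from the machinery already set up in the proof of Proposition \ref{prop:curvature.t}, exploiting the fact that here the situation is strictly simpler. Indeed, by \eqref{eq:Lflow.ODE.sol} we have $h_t\equiv 1$, so the metric $g_t=f_t^2\eta_0^2+g_{\cD_0}$ in \eqref{eq:Lflow.3} is already of the rescaled form $\overline{g}_t$ appearing in \eqref{eq:overline.g.t}. In particular, there is no overall conformal factor $h_t^2$ to track, and the rescaling relation \eqref{eq:rescaling.Rm} degenerates to $|Rm_t|_{g_t}=|\overline{Rm}_t|_{\overline{g}_t}$, so no conformal-change correction terms appear.

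Next I would invoke the same two structural facts from \cite{lotay2021} that were used in the proof of Proposition \ref{prop:curvature.t}. By \cite{lotay2021}*{Proposition 3.2}, the transverse connection on $\cD_0$ induced by $g_t$ agrees with that of $g_0$, so the transverse curvature contribution $|Rm_0^{\cD_0}|_{g_0}^2$ is frozen in $t$. By \cite{lotay2021}*{Proposition 3.8}, the remaining components of the Riemann curvature scale by $f_t^2/h_t^2=f_t^2$ relative to the standard Sasakian metric, so that their contribution to $|Rm_t|_{g_t}^2$ is of the form $c_0 f_t^4$ for a constant $c_0>0$ (this is exactly \eqref{eq:overline.Rm.t} specialised to $h_t=1$). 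Combining the two pieces gives $|Rm_t|_{g_t}^2=|Rm_0^{\cD_0}|_{g_0}^2+c_0 f_t^4$, after which I would substitute $f_t=\epsilon(1-8\epsilon^2t)^{-1/2}$ from \eqref{eq:Lflow.ODE.sol}, so that $f_t^4=\epsilon^4(1-8\epsilon^2t)^{-2}$ and the claimed formula follows.

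I do not anticipate any genuine obstacle: since $h_t\equiv 1$, the transverse geometry, and therefore $Rm_0^{\cD_0}$, is literally constant along the flow, and the only time dependence enters through the single fibre-direction function $f_t$. Thus this is a direct corollary of the computation already performed for the coflow rather than a fresh calculation. The one point meriting a line of care is confirming that the constant $c_0$ here is the same geometric quantity as in Proposition \ref{prop:curvature.t}, which it is, being determined entirely by the fixed Sasakian data $(\eta_0,g_{\cD_0})$; this guarantees that the curvature genuinely splits into a $t$-independent transverse piece and a single $f_t^4$ fibre piece, with no cross terms surviving because $\cD_0$ and the $\eta_0$-direction are $g_t$-orthogonal.
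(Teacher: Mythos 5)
Your proposal is correct and follows essentially the same route as the paper, which likewise deduces the formula directly from the proof of Proposition \ref{prop:curvature.t} together with \eqref{eq:Lflow.ODE.sol}: since $h_t\equiv 1$ the conformal rescaling step \eqref{eq:rescaling.Rm} is trivial, and \eqref{eq:overline.Rm.t} with $f_t=\epsilon(1-8\epsilon^2t)^{-1/2}$ gives the result. Your closing observation that $c_0$ is the same constant, fixed by the Sasakian data $(\eta_0,g_{\cD_0})$, is also consistent with the paper's usage.
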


\begin{remark}
We observe, by Proposition \ref{prop:curvature.t.2}, that $|Rm_{t}|_{g_t}$ blows up as $t\to\frac{1}{8\epsilon^2}$ (the finite-time singularity), but that $(1-8\epsilon^2t)|Rm_{t}|_{g_t}$ remains bounded as $t\to\frac{1}{8\epsilon^2}$.  We also notice that $|Rm_t|_{g_t}$ remains bounded as $t\to-\infty$, but that $-t|Rm_t|_{g_t}$ always blows up at $t\to-\infty$, unless the tranverse Calabi--Yau geometry is flat. 
\end{remark}

\subsection{Full torsion tensor}

We now study the behaviour of the full torsion tensor along our solution of the Laplacian flow.

\begin{prop}\label{prop:torsion.t.2}
The full torsion tensor $T_t$ of the solution to the Laplacian flow in \eqref{eq:Lflow.1}--\eqref{eq:Lflow.4} is
\begin{equation}\label{eq:torsion.t.2}
    \begin{split}
T_t&=\frac{1}{4}(\tau_0)_tg_t-\frac{1}{4}\rj_{\varphi_t}((\tau_3)_t)\\        
&=\frac{3}{14}\epsilon(1-8\epsilon^2t)^{-1/2} g_t\\
&\qquad-\frac{1}{14}\rj_{\varphi_t}(4\epsilon^2(1-8\epsilon^2t)^{-1}\eta_0\wedge\omega_0-3\epsilon(1-8\epsilon^2t)^{-1/2}\Re\Upsilon_0)\\
&=-\frac{3}{2}\epsilon^3(1-8\epsilon^2t)^{-3/2}\eta_0^2+\frac{1}{2}\epsilon(1-8\epsilon^2t)^{-1/2}g_{\cD_0}.
    \end{split}
\end{equation}
\end{prop}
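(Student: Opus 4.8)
The plan is to follow the proof of Proposition \ref{prop:full.torsion.t} essentially verbatim, since every ingredient used there depends only on the ansatz \eqref{Eq.fluxo1} and not on the specific time-dependence of $f_t,h_t$. The first two displayed expressions for $T_t$ are immediate: the first is just \eqref{eq:torsion.coclosed} applied to the coclosed $\gt$-structure \eqref{eq:Lflow.1}, and the second follows by inserting the torsion forms $(\tau_0)_t$ and $(\tau_3)_t$ from Lemma \ref{lem:torsion.t} together with the explicit values $f_t=\epsilon(1-8\epsilon^2t)^{-1/2}$ and $h_t=1$ from \eqref{eq:Lflow.ODE.sol}.

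For the final expression I would compute $\rj_{\varphi_t}$ on $\eta_0\wedge\omega_0$ and $\Re\Upsilon_0$. Because the pointwise formulas \eqref{eq:j.eta.omega.1}--\eqref{eq:j.ReUpsilon.3} were derived solely from the form of \eqref{Eq.fluxo1}, they apply here without change; evaluating on $\xi_0$ and on $\cD_0$ and using $g_t=f_t^2\eta_0^2+h_t^2 g_{\cD_0}$ identifies, as symmetric $2$-tensors, $\rj_{\varphi_t}(\eta_0\wedge\omega_0)=6 f_t h_t^{-2}\eta_0^2 + 2 f_t^{-1} g_{\cD_0}$ and $\rj_{\varphi_t}(\Re\Upsilon_0)= 4 h_t^{-1} g_{\cD_0}$. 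Setting $h_t=1$ and $f_t=\epsilon(1-8\epsilon^2t)^{-1/2}$ then gives the analogue of \eqref{eq:j.t}, namely
\[
\rj_{\varphi_t}\!\left(4\epsilon^2(1-8\epsilon^2t)^{-1}\eta_0\wedge\omega_0 - 3\epsilon(1-8\epsilon^2t)^{-1/2}\Re\Upsilon_0\right) = 24\epsilon^3(1-8\epsilon^2t)^{-3/2}\eta_0^2 - 4\epsilon(1-8\epsilon^2t)^{-1/2}g_{\cD_0}.
\]

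Substituting this together with the metric \eqref{eq:Lflow.3} into the second expression for $T_t$, I would collect the coefficients of $\eta_0^2$ and of $g_{\cD_0}$ separately: the $\eta_0^2$ coefficient is $\tfrac{3}{14}\epsilon^3-\tfrac{24}{14}\epsilon^3=-\tfrac{3}{2}\epsilon^3$ and the $g_{\cD_0}$ coefficient is $\tfrac{3}{14}\epsilon+\tfrac{4}{14}\epsilon=\tfrac{1}{2}\epsilon$, yielding the claimed closed form. There is no genuine obstacle here: the whole content is the reuse of the $\rj$-computations already established in the coflow case, with the single simplification $h_t\equiv 1$, so the only thing to watch is the arithmetic of combining the $g_t$-term with the $\rj_{\varphi_t}$-term at the matching powers of $(1-8\epsilon^2t)$.
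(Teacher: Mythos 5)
Your proposal is correct and matches the paper's own proof essentially step for step: the first two expressions come from \eqref{eq:torsion.coclosed}, Lemma \ref{lem:torsion.t} and \eqref{eq:Lflow.ODE.sol}, and the closed form is obtained by reusing \eqref{eq:j.eta.omega.1}--\eqref{eq:j.ReUpsilon.3} with $f_t=\epsilon(1-8\epsilon^2t)^{-1/2}$, $h_t=1$, exactly as the paper does in \eqref{eq:j.eta.omega}--\eqref{eq:j.torsion.t.2}. Your intermediate identities $\rj_{\varphi_t}(\eta_0\wedge\omega_0)=6f_th_t^{-2}\eta_0^2+2f_t^{-1}g_{\cD_0}$ and $\rj_{\varphi_t}(\Re\Upsilon_0)=4h_t^{-1}g_{\cD_0}$ and the final coefficient arithmetic all check out.
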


\begin{proof}
The first two descriptions of $T_t$ in \eqref{eq:torsion.t.2} follow from \eqref{eq:torsion.coclosed}, Lemma \ref{lem:torsion.t} and \eqref{eq:Lflow.ODE.sol}.

We then see from \eqref{eq:j.eta.omega.1}--\eqref{eq:j.eta.omega.3} that
\begin{equation}\label{eq:j.eta.omega}
\rj_{\varphi_t}(\eta_0\wedge\omega_0)=6\epsilon(1-8\epsilon^2t)^{-1/2}\eta_0^2+\frac{2}{\epsilon}(1-8\epsilon^2t)^{1/2}g_{\cD_0},    
\end{equation}
and applying \eqref{eq:j.ReUpsilon.1}--\eqref{eq:j.ReUpsilon.3} yields
\begin{equation}\label{eq:j.ReUpsilon}
\rj_{\varphi_t}(\Re\Upsilon_0)=4g_{\cD_0}.    
\end{equation}
Combining \eqref{eq:j.eta.omega} and \eqref{eq:j.ReUpsilon} shows that
\begin{multline}\label{eq:j.torsion.t.2}
\rj_{\varphi_t}(4\epsilon^2(1-8\epsilon^2t)^{-1}\eta_0\wedge\omega_0-3\epsilon(1-8\epsilon^2t)^{-1/2}\Re\Upsilon_0)\\
=24\epsilon^3(1-8\epsilon^2t)^{-3/2}\eta_0^2-4\epsilon(1-8\epsilon^2t)^{-1/2}g_{\cD_0}.
\end{multline}
Substituting \eqref{eq:Lflow.3} and \eqref{eq:j.torsion.t.2} into \eqref{eq:torsion.t.2} gives the result.
\end{proof}

Our next result describes the norm of the full torsion tensor and is gradient along the flow.

\begin{prop}
Let $T_t$ be the full torsion tensor of the solution to the Laplacian flow in \eqref{eq:Lflow.1}--\eqref{eq:Lflow.4}.  Then
\begin{align*}
    |T_t|^2_{g_t}&=\frac{15}{4}\epsilon^2(1-8\epsilon^2t)^{-1},\\
    |\nabla_tT_t|_{g_t}^2&=c_0\epsilon^4(1-8\epsilon^2t)^{-2},\\
    \div_tT_t&=0,
\end{align*}
where $c_0>0$ is a constant, $\nabla_t$ is the Levi-Civita connection of $g_t$ and $\div_t$ is the divergence with respect to the metric $g_t$.
\end{prop}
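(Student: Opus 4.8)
The plan is to follow the template of the proof of Proposition~\ref{prop:norm.full.torsion.t} (the coflow counterpart), exploiting that the present situation is in fact simpler because $h_t\equiv 1$ along the Laplacian flow by \eqref{eq:Lflow.ODE.sol}. First I would rewrite the full torsion tensor from Proposition~\ref{prop:torsion.t.2} in the form analogous to \eqref{eq:Tt.decomp}, namely
\[
T_t=-2\epsilon^3(1-8\epsilon^2t)^{-3/2}\eta_0^2+\tfrac12\epsilon(1-8\epsilon^2t)^{-1/2}g_t,
\]
which one checks is equivalent to the last line of \eqref{eq:torsion.t.2} upon substituting $g_t=\epsilon^2(1-8\epsilon^2t)^{-1}\eta_0^2+g_{\cD_0}$ from \eqref{eq:Lflow.3}. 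Equivalently, one may keep the $g_t$-orthogonal splitting $T_t=-\tfrac32\epsilon^3(1-8\epsilon^2t)^{-3/2}\eta_0^2+\tfrac12\epsilon(1-8\epsilon^2t)^{-1/2}g_{\cD_0}$ into a term supported on the Reeb direction and a term supported on $\cD_0$.

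To obtain $|T_t|_{g_t}^2$ I would compute the two pointwise norms directly from \eqref{eq:Lflow.3}. Since $g_t(\xi_0,\xi_0)=\epsilon^2(1-8\epsilon^2t)^{-1}$ we get $|\epsilon^2\eta_0^2|_{g_t}^2=(1-8\epsilon^2t)^2$, while $g_t|_{\cD_0}=g_{\cD_0}$ gives $|g_{\cD_0}|_{g_t}^2=6$ (the rank of $\cD_0$). Substituting these into the orthogonal splitting, the two contributions are $\tfrac94\epsilon^2(1-8\epsilon^2t)^{-1}$ and $\tfrac32\epsilon^2(1-8\epsilon^2t)^{-1}$, whose sum is the claimed $\tfrac{15}{4}\epsilon^2(1-8\epsilon^2t)^{-1}$.

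For the last two identities I would argue exactly as in Proposition~\ref{prop:norm.full.torsion.t}. Since $\nabla_tg_t=0$, the $g_t$-term of $T_t$ contributes nothing to either $\nabla_tT_t$ or $\div_tT_t$, so everything is governed by $\nabla_t(\eta_0^2)$. As $\xi_0$ generates a local symmetry of $\varphi_t$, hence of $g_t$, we have $(\nabla_t)_{\xi_0}\xi_0=0$; this eliminates the Reeb-direction derivatives, forces $\div_tT_t=0$, and leaves only the horizontal derivatives $(\nabla_t)_X(\eta_0^2)$ with $X\in\cD_0$. Writing the $\eta_0^2$-coefficient as a multiple of $f_t^2$, so that $T_t=-2\epsilon(1-8\epsilon^2t)^{-1/2}f_t^2\eta_0^2+\tfrac12\epsilon(1-8\epsilon^2t)^{-1/2}g_t$, I would invoke \cite{lotay2021}*{Proposition 3.2}, which gives $|(\nabla_t)_X(f_t^2\eta_0^2)|_{g_t}^2=c_0f_t^2/h_t^4$ for unit $X\in\cD_0$. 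Substituting $f_t^2=\epsilon^2(1-8\epsilon^2t)^{-1}$ and $h_t=1$ from \eqref{eq:Lflow.ODE.sol} then yields $|\nabla_tT_t|_{g_t}^2=c_0\epsilon^4(1-8\epsilon^2t)^{-2}$ after absorbing the numerical factor into $c_0$.

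I do not anticipate a genuine obstacle: once Proposition~\ref{prop:torsion.t.2} and the transverse-geometry input \cite{lotay2021}*{Proposition 3.2} are in hand, the computation is essentially mechanical and, with $h_t\equiv1$, strictly easier than its coflow analogue. The only point requiring care is the justification that the Reeb field $\xi_0$ remains a Killing symmetry of $g_t$ for every $t$, so that $(\nabla_t)_{\xi_0}\xi_0=0$; this is precisely what makes the divergence vanish and reduces the gradient computation to the single horizontal term.
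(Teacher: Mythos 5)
Your proposal is correct and follows essentially the same route as the paper: the paper likewise rewrites $T_t$ in the $(\eta_0^2,g_t)$ form, computes $|\epsilon^2\eta_0^2|_{g_t}^2=(1-8\epsilon^2t)^2$ and $|g_{\cD_0}|_{g_t}^2=6$ to get the two contributions $\frac{9}{4}\epsilon^2(1-8\epsilon^2t)^{-1}$ and $\frac{3}{2}\epsilon^2(1-8\epsilon^2t)^{-1}$, and then defers the $|\nabla_tT_t|_{g_t}^2$ and $\div_tT_t$ claims to the coflow argument of Proposition~\ref{prop:norm.full.torsion.t}, i.e.\ exactly your use of $\nabla_tg_t=0$, the Killing symmetry $\xi_0$ of $g_t$ (which holds because $\xi_0$ preserves $\eta_0$, $\omega_0$ and $\Upsilon_0$, hence $\varphi_t$), and \cite{lotay2021}*{Proposition 3.2} with $f_t^2=\epsilon^2(1-8\epsilon^2t)^{-1}$, $h_t=1$. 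Your flagged point of care and your absorption of the numerical factor into $c_0$ both match the paper's treatment, so there is nothing to correct.
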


\begin{proof}  The proof is almost identical to that of Proposition \ref{prop:norm.full.torsion.t}, so we only prove the formula for $|T_t|_{g_t}^2$.
We observe from the formula \eqref{eq:Lflow.3} for the metric $g_t$ that the transverse metric is not changing along the flow, but that 
\[
|\epsilon^2\eta_0^2|_{g_t}^2=(1-8\epsilon^2 t)^2.
\]
Hence, we see from Proposition \ref{prop:torsion.t.2} that
\[
|T_t|_{g_t}^2=\frac{9}{4}\epsilon^2(1-8\epsilon^2t)^{-1}+\frac{3}{2}\epsilon^2(1-8\epsilon^2t)^{-1}=\frac{15}{4}\epsilon^2(1-8\epsilon^2t)^{-1}
\]
as claimed.  The equation for $|\nabla_tT_t|_{g_t}^2$ and the vanishing of $\div_tT_t$ follow exactly as in the proof of Proposition \ref{prop:norm.full.torsion.t}.
\end{proof}

\begin{remark}
We again see that $|T_t|_{g_t}^2$ and $|\nabla_tT_t|_{g_t}$ blow up at the same rate as we approach the finite time singularity at $t=\frac{1}{8\epsilon^2}$, as we would expect, and again that there is a component of $|Rm_t|_{g_t}$ that instead stays bounded as $t\to\frac{1}{8\epsilon^2}$. Moreover, multiplying each of those quantities by $(1-8\epsilon^2t)$ leads to functions that are bounded as $t\to\frac{1}{8\epsilon^2}$, which will be significant later.
\end{remark}

\subsection{Singularity analysis}

As for the Laplacian coflow, we can now put our estimates together to obtain a description of the finite time singularity in the language of Definition \ref{dfn:sing.types}, which proves Theorem \ref{thm:sings}(b).

\begin{prop}
In a cCY setup \eqref{eq: standard setup}, suppose moreover $M^7$ is compact, and let
\textcolor{black}{$K:=\sup_M|Rm_0^{\cD_0}|_{g_0}$}. Then there is a constant $c_0>0$, independent of $\epsilon$, such that the quantity $\Lambda(t)$ in \eqref{eq:Lambda.t}, along the Laplacian flow solution given by Theorem \ref{thm:Laplacian.flow.cCY}, is given by
\[
\Lambda(t)=(K^2+c_0\epsilon^4(1-8\epsilon^2t)^{-2})^{1/2}.
\]
Hence, the Laplacian flow has a Type I finite-time singularity at $t=\frac{1}{8\epsilon^2}$.
\end{prop}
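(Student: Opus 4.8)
The plan is to combine the two estimates we have already established into the single quantity $\Lambda(t)$ and then read off the singularity type directly from its dependence on $t$. Recall that, by definition,
\[
\Lambda(x,t)=\big(|Rm(x,t)|^2_{g_t}+|T(x,t)|^4_{g_t}+|\nabla^{g_t}T(x,t)|^2_{g_t}\big)^{1/2}.
\]
First I would substitute the three ingredients we have already computed for the Laplacian flow solution. From Proposition \ref{prop:curvature.t.2} we have the pointwise formula $|Rm_t|^2_{g_t}=|Rm_0^{\cD_0}|^2_{g_0}+c_0\epsilon^4(1-8\epsilon^2t)^{-2}$; from the previous proposition we have $|T_t|^2_{g_t}=\tfrac{15}{4}\epsilon^2(1-8\epsilon^2t)^{-1}$, so that $|T_t|^4_{g_t}=\tfrac{225}{16}\epsilon^4(1-8\epsilon^2t)^{-2}$, and $|\nabla_tT_t|^2_{g_t}=c_0\epsilon^4(1-8\epsilon^2t)^{-2}$. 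The crucial structural observation is that every term \emph{except} the transverse curvature $|Rm_0^{\cD_0}|^2_{g_0}$ carries exactly the factor $(1-8\epsilon^2t)^{-2}$. Collecting these, I would absorb the three $(1-8\epsilon^2t)^{-2}$-terms into a single constant (renaming it $c_0$, harmlessly, as the statement allows) to obtain
\[
\Lambda(x,t)^2=|Rm_0^{\cD_0}(x)|^2_{g_0}+c_0\epsilon^4(1-8\epsilon^2t)^{-2}.
\]

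Next I would take the supremum over $x\in M$. Since $M$ is compact, $K:=\sup_M|Rm_0^{\cD_0}|_{g_0}$ is finite, and because the second term is independent of $x$, the supremum distributes term-by-term to give the claimed formula $\Lambda(t)=\big(K^2+c_0\epsilon^4(1-8\epsilon^2t)^{-2}\big)^{1/2}$. (One should note that the first-term supremum and the $t$-dependent second term achieve their suprema compatibly, which is automatic here since the latter is constant in $x$.)

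Finally, to classify the singularity I would form the product $(T-t)\Lambda(t)$ with $T=\tfrac{1}{8\epsilon^2}$, the singular time. As $t\to\tfrac{1}{8\epsilon^2}$ the dominant contribution to $\Lambda(t)$ is $\sqrt{c_0}\,\epsilon^2(1-8\epsilon^2t)^{-1}$, which blows up like $(T-t)^{-1}$ since $1-8\epsilon^2t=8\epsilon^2(T-t)$. Hence $(T-t)\Lambda(t)$ stays bounded (indeed tends to a positive constant) as $t\to T$, while the $K^2$ term contributes only a vanishing correction; by Definition \ref{dfn:sing.types} this is exactly a Type I finite-time singularity. I anticipate no serious obstacle: the genuine content has already been done in the curvature and torsion propositions, and the only mild point requiring care is confirming that the $K^2$ term does not disrupt the Type I bound, which it cannot since $(T-t)^2K^2\to0$. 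The proof is therefore a short assembly of prior estimates followed by a one-line asymptotic check.
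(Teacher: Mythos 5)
Your proposal is correct and takes essentially the same route as the paper's own (much terser) proof: it assembles $\Lambda(t)$ from Proposition \ref{prop:curvature.t.2} and the torsion-norm proposition, absorbs all the $\epsilon^4(1-8\epsilon^2t)^{-2}$ terms into one constant, and checks that $(\frac{1}{8\epsilon^2}-t)\Lambda(t)$ stays bounded, giving Type I by Definition \ref{dfn:sing.types}. You also correctly interpret $K$ as $\sup_M|Rm_0^{\cD_0}|_{g_0}$ (the statement's ``$\sum_M$'' is a typo, as the analogous coflow proposition confirms), and your explicit handling of the supremum over $x$ and the vanishing $K^2$ correction fills in details the paper leaves implicit.
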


\begin{proof}
The formula for $\Lambda(t)$ follows from Propositions \ref{prop:curvature.t.2} and \ref{prop:torsion.t.2}.  We then see that $(\frac{1}{8\epsilon^2}-t)\Lambda(t)$ remains bounded as $t\to\frac{1}{8\epsilon^2}$, and thus the result follows from Definition \ref{dfn:sing.types}.
\end{proof}


\section{The Hitchin flow solution}
\label{sec: Hitchin.flow}

In this section we solve the Hitchin flow \eqref{eq:Hitchin.flow} for the natural coclosed $\gt$-structures on a contact Calabi--Yau 7-manifold and show a relationship to Calabi--Yau structures in (real) dimension 8.

\subsection{Solving the flow}

As the initial condition for the Hitchin flow, we again choose the natural coclosed $\gt$-structure $\varphi_0$ as in \eqref{eq: phi0 and psi0} on a contact Calabi-Yau $7$-manifold.  

\begin{prop}
\label{prop:Hitchin.flow}
    On a cCY setup \eqref{eq: standard setup}, the following family of coclosed $\gt$-structures $\varphi_t$ with associated 4-form $\psi_t$, metric $g_t$, and volume $\vol_t$, satisfies the Hitchin flow \eqref{eq:Hitchin.flow}, with initial condition  \eqref{eq: phi0 and psi0}:
\begin{align}
    \varphi_t&=\epsilon(1+\frac{5}{2}\epsilon t)^{-1/5}\eta_0\wedge\omega_0+(1+\frac{5}{2}\epsilon t)^{3/5}\Re\Upsilon_0;\label{eq:H.flow.1}\\
    \psi_t&=\frac{1}{2}(1+\frac{5}{2}\epsilon t)^{4/5}\omega_0^2-\epsilon\eta_0\wedge\Im\Upsilon_0;\label{eq:H.flow.2}\\
    g_t&=\epsilon^2(1+\frac{5}{2}\epsilon t)^{-6/5}\eta_0^2+(1+\frac{5}{2}\epsilon t)^{2/5}g_{\cD_0};\label{eq:H.flow.3}\\
    \vol_t&=\epsilon(1+\frac{5}{2}\epsilon t)^{3/5}\eta_0\wedge\vol_{\cD_0},
\end{align}
    defined  for all $t\in (-\frac{2}{5\epsilon},\infty)$.
\end{prop}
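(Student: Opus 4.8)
The plan is to recognise the proposed family as an instance of the general ansatz \eqref{Eq.fluxo1}, \eqref{eq.fluxo2} and then reduce the Hitchin flow \eqref{eq:Hitchin.flow} to a pair of ODEs for the two profile functions. Concretely, writing $\rr(t)=(1+\frac{5}{2}\epsilon t)^{1/5}$, the stated $\varphi_t$ and $\psi_t$ are exactly \eqref{Eq.fluxo1} and \eqref{eq.fluxo2} with $h_t=\rr(t)$ and $f_t=\epsilon\rr(t)^{-3}$, which satisfy the initial conditions \eqref{eq:init.conds.fh}. It therefore suffices to verify the two defining conditions of the Hitchin flow: the closedness constraint $\rd\psi_t=0$ and the evolution equation $\frac{\partial\psi_t}{\partial t}=\rd\varphi_t$.

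The first condition is immediate from Lemma \ref{lem:torsion.t}, which gives $\rd\psi_t=0$ for \emph{any} choice of $f_t,h_t$, since $\psi_t$ is assembled from the closed forms $\omega_0^2$ and $\eta_0\wedge\Im\Upsilon_0$. The same lemma also supplies $\rd\varphi_t=f_th_t^2\,\omega_0^2$. On the other hand, differentiating \eqref{eq.fluxo2} in time gives
\[
\frac{\partial\psi_t}{\partial t}
=\frac{1}{2}\frac{d}{dt}(h_t^4)\,\omega_0^2
-\frac{d}{dt}(f_th_t^3)\,\eta_0\wedge\Im\Upsilon_0.
\]
Because $\rd\varphi_t$ lies entirely in the span of $\omega_0^2$, whereas $\omega_0^2$ and $\eta_0\wedge\Im\Upsilon_0$ are pointwise linearly independent $4$-forms, equating $\frac{\partial\psi_t}{\partial t}=\rd\varphi_t$ decouples into
\[
\frac{1}{2}\frac{d}{dt}(h_t^4)=f_th_t^2
\qandq
\frac{d}{dt}(f_th_t^3)=0.
\]
The second equation, with \eqref{eq:init.conds.fh}, forces $f_t=\epsilon h_t^{-3}$ (and in particular keeps the family coclosed), while substituting this into the first yields the separable equation $2h_t^4\,h_t'=\epsilon$, that is $\frac{d}{dt}\bigl(\frac{2}{5}h_t^5\bigr)=\epsilon$.

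Integrating with $h_0=1$ gives $h_t^5=1+\frac{5}{2}\epsilon t$, hence $h_t=\rr(t)$ and $f_t=\epsilon\rr(t)^{-3}$, which reproduce the claimed $\varphi_t$ and $\psi_t$; the metric and volume formulas then follow by substituting into \eqref{eq: metric.vol.t}. The solution persists exactly while $1+\frac{5}{2}\epsilon t>0$, giving the maximal interval $t\in(-\frac{2}{5\epsilon},\infty)$. This argument runs essentially parallel to the proof of Theorem \ref{thm:Laplacian.coflow.cCY}, the only structural difference being that the right-hand side is now $\rd\varphi_t=f_th_t^2\,\omega_0^2$ rather than the Laplacian $2f_t^2\,\omega_0^2$; accordingly there is no genuine analytic obstacle. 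The one point meriting care is the decoupling step, where one must confirm that $\rd\varphi_t$ contributes only to the $\omega_0^2$ component, so that the $\eta_0\wedge\Im\Upsilon_0$ component of $\frac{\partial\psi_t}{\partial t}$ is forced to vanish and the flow stays within the ansatz.
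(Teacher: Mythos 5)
Your proposal is correct and follows essentially the same route as the paper's proof: both reduce the Hitchin flow to the ansatz \eqref{Eq.fluxo1}--\eqref{eq.fluxo2}, obtain the ODE system $\frac{d}{dt}(h_t^4)=2f_th_t^2$, $\frac{d}{dt}(f_th_t^3)=0$, and integrate with $f_t=\epsilon h_t^{-3}$ to get $h_t^5=1+\frac{5}{2}\epsilon t$. Your added remark on the decoupling via the pointwise linear independence of $\omega_0^2$ and $\eta_0\wedge\Im\Upsilon_0$ just makes explicit the coefficient-matching the paper performs implicitly.
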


\begin{proof}
We consider the ansatz for our $\gt$-structures $\varphi_t$ solving the Hitchin flow as in \eqref{Eq.fluxo1}, depending on functions $f_t,h_t$ of time only, with dual $4$-forms $\psi_t$ as in \eqref{eq.fluxo2}, with the constraints on $f_0,h_0$ as in \eqref{eq:init.conds.fh}.

Under this assumption, we can compute
\begin{equation*}
    \rd\varphi_t=f_th_t^2\omega_0^2\qandq \rd\psi_t=0,
\end{equation*}
so that the Hitchin flow \eqref{eq:Hitchin.flow} is then equivalent to solving
\begin{equation}\label{eq:Hitchin.ODEs}
    \frac{d}{dt}(h_t^4)=2f_th_t^2\qandq \frac{d}{dt}(f_th_t^3)=0
\end{equation}
using \eqref{eq.fluxo2}.  The second equation in \eqref{eq:Hitchin.ODEs}, together with \eqref{eq:init.conds.fh}, yields
\begin{equation}\label{eq:Hitchin.ODEs1}
    f_t=\epsilon h_t^{-3}.
\end{equation}
Substituting \eqref{eq:Hitchin.ODEs1} into the first equation in \eqref{eq:Hitchin.ODEs} then gives
\begin{equation}
\label{eq:Hitchin.ODEs2}
   \frac{d}{dt}(h_t^4)=2\epsilon h_t^{-1}.
\end{equation}
The ODE \eqref{eq:Hitchin.ODEs2} can be solved using \eqref{eq:init.conds.fh} and thus, using \eqref{eq:Hitchin.ODEs1}, we can conclude that the solution to \eqref{eq:Hitchin.ODEs} is
\begin{align*}
    f_t&=\epsilon(1+\frac{5}{2}\epsilon t)^{-3/5}\qandq h_t=(1+\frac{5}{2}\epsilon t)^{1/5}.
    \qedhere
\end{align*}
\end{proof}

\subsection{Calabi--Yau structure}

We can now show that the solution to the Hitchin flow in Proposition \ref{prop:Hitchin.flow} leads to a Calabi--Yau structure on the spacetime track of the flow.

\begin{lemma}
\label{lem:CY} 
    The following 2-form $\widehat{\omega}$, metric $\widehat{g}$ and complex $4$-form $\widehat{\Upsilon}$ define a Calabi--Yau structure on
    \textcolor{black}{$(-\frac{2}{5\epsilon},\infty)\times M^7$}:
\begin{align}
    \widehat{\omega}
    &=\epsilon(1+\frac{5}{2}\epsilon t)^{-3/5}\rd t\wedge\eta_0+(1+\frac{5}{2}\epsilon t)^{2/5}\omega_0;
    \label{eq:CY4.1}\\
    \widehat{g}
    &=\rd t^2+\epsilon^2(1+\frac{5}{2}\epsilon t)^{-6/5}\eta_0^2+(1+\frac{5}{2}\epsilon t)^{2/5}g_0;
    \label{eq:CY4.2}\\
    \widehat{\Upsilon}
    &=((1+\frac{5}{2}\epsilon t)^{3/5}\rd t+i\epsilon\eta_0)\wedge \Upsilon_0.\label{eq:CY4.3}
\end{align}
The metric $\widehat{g}$ is incomplete and has holonomy contained in $\mathrm{SU}(4)$.
\end{lemma}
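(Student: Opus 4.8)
The plan is to show that the pair $(\widehat\omega,\widehat\Upsilon)$ is a \emph{torsion-free} $\SU(4)$-structure on $N:=(-\tfrac{2}{5\epsilon},\infty)\times M^7$ whose induced metric is $\widehat g$; the assertion $\Hol(\widehat g)\subseteq\SU(4)$, and with it the Calabi--Yau (Kähler Ricci-flat) property, then follows from the standard fact that an $\SU(n)$-structure, specified by a compatible fundamental $2$-form and complex volume form, is torsion-free precisely when \emph{both} forms are closed (all torsion classes then vanish, forcing integrability of the almost complex structure and triviality of the canonical bundle). Thus there are two things to check: first, that $(\widehat\omega,\widehat\Upsilon)$ is pointwise a genuine $\SU(4)$-structure inducing $\widehat g$; and second, that $\rd\widehat\omega=0$ and $\rd\widehat\Upsilon=0$. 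Throughout I write $\rr=\rr(t)=(1+\tfrac52\epsilon t)^{1/5}$ and $\widehat g=\rd t^2+g_t$, with $g_t$ the Hitchin-flow metric of Proposition \ref{prop:Hitchin.flow}.

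For the closedness I would compute $\rd$ on $N$ as $\rd t\wedge\partial_t+\rd_M$. Since $\rd_M\eta_0=\omega_0$ and $\rd_M\omega_0=0$, the form $\rd\widehat\omega$ reduces to a single term proportional to $\rd t\wedge\omega_0$ with coefficient $2\rr\rr'-\epsilon\rr^{-3}$; this vanishes exactly because $\rr$ satisfies the Hitchin ODE $\tfrac{\rd}{\rd t}\rr^4=2\epsilon\rr^{-1}$ (equivalently $2\rr^4\rr'=\epsilon$) established in Proposition \ref{prop:Hitchin.flow}. For $\widehat\Upsilon$, using $\rd_M\Upsilon_0=0$ and $\rd_M\eta_0=\omega_0$ one finds $\rd\widehat\Upsilon=\ri\epsilon\,\omega_0\wedge\Upsilon_0$, which is zero by bidegree since $\omega_0\wedge\Upsilon_0=0$ on the transverse Calabi--Yau. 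Hence both forms are closed, and this is exactly the step where the solved flow equations and the contact Calabi--Yau relations enter.

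For the pointwise compatibility I would exhibit a unitary coframe. With $e^0=\rd t$ and $e^7=\epsilon\rr^{-3}\eta_0$ orthonormal for $\widehat g$, the $1$-form $\rr^3\rd t+\ri\epsilon\eta_0=\rr^3(e^0+\ri e^7)$ spans one $(1,0)$-direction, while the transverse $\SU(3)$-structure $(\omega_0,\Upsilon_0)$, rescaled by $\rr$, supplies the remaining three; this fixes the almost complex structure $\widehat J$ and confirms that $\widehat g=\rd t^2+g_t$ is the associated metric. It then remains to verify the algebraic $\SU(4)$ relations: $\widehat\omega\wedge\widehat\Upsilon=0$ (immediate from $\omega_0\wedge\Upsilon_0=0$ and $\eta_0\wedge\eta_0=0$), the volume normalisation $\tfrac{1}{4!}\widehat\omega^4=\widehat{\vol}$, and $\widehat\Upsilon\wedge\overline{\widehat\Upsilon}=2^4\,\widehat{\vol}$, the last using the cCY identity \eqref{eq:vol.D} to pin down the constant. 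Together these confirm that $(\widehat\omega,\widehat\Upsilon)$ is a genuine $\SU(4)$-structure with metric $\widehat g$; combined with closedness it is torsion-free, so $\widehat g$ is Calabi--Yau with $\Hol(\widehat g)\subseteq\SU(4)$. As a consistency check one may note $\tfrac12\widehat\omega^2+\Re\widehat\Upsilon=\rd t\wedge\varphi_t+\psi_t$, the closed Cayley form of the associated $\mathrm{Spin}(7)$-structure, recovering $\Hol(\widehat g)\subseteq\mathrm{Spin}(7)$ a fortiori.

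Finally, for incompleteness I would use that as $t$ decreases to the finite left endpoint one has $\rr(t)\to0$, so the transverse directions collapse while the $\eta_0$-direction blows up. Since $\widehat g(\partial_t,\partial_t)=1$, the curve $t\mapsto(t,p)$ at fixed $p\in M$ has length equal to the $t$-interval traversed, hence finite length down to the endpoint, yet it eventually leaves every compact subset of $N$; this exhibits a finite-length curve with no limit in $N$, so $\widehat g$ is incomplete. The only genuine bookkeeping obstacle is the pointwise step: fixing $\widehat J$ unambiguously and matching the normalisation constants against the convention \eqref{eq:vol.D}; the closedness, by contrast, is essentially immediate from the already-solved Hitchin flow.
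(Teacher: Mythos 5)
Your proof is correct and follows essentially the same route as the paper, whose proof simply asserts that the algebraic $\SU(4)$ conditions and the closedness of both forms are ``straightforward to check'': your unitary coframe $\rr^3(e^0+\ri e^7)$, the verification that the $\rd t\wedge\omega_0$ coefficient $2\rr\rr'-\epsilon\rr^{-3}$ vanishes via the Hitchin ODE $2\rr^4\rr'=\epsilon$, the normalisations $\tfrac{1}{4!}\widehat{\omega}^4=\widehat{\vol}$ and $\widehat{\Upsilon}\wedge\overline{\widehat{\Upsilon}}=2^4\widehat{\vol}$, and the finite-length escaping curve for incompleteness are exactly the omitted details, all accurately carried out. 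Note that you also work (correctly) on $(-\frac{2}{5\epsilon},\infty)\times M^7$, consistently with Theorem \ref{thm:Hitchin.flow} and Proposition \ref{prop:Hitchin.flow}, thereby silently fixing the typo $-\frac{3}{5\epsilon}$ in the lemma's statement.
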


\begin{proof}
It is straightforward to check that $\widehat{\omega}$ in \eqref{eq:CY4.1} and $\widehat{\Upsilon}$ in \eqref{eq:CY4.3} induce the metric $\widehat{g}$ in \eqref{eq:CY4.2}, they satisfy the algebraic conditions to define an $\mathrm{SU}(4)$ structure and they are both closed, and thus define a Calabi--Yau structure as claimed.  The incompleteness of the metric is clear.
\end{proof}

\begin{remark}
We know that the metric  induced by the Hitchin flow is given by $\widehat{g}=\rd t^2+g_t$, where $g_t$ is given in \eqref{eq:H.flow.3}, which agrees with the formula in \eqref{eq:CY4.2}. 

We also know by the definition of the Hitchin flow that the induced torsion-free $\mathrm{Spin}(7)$-structure on \textcolor{black}{ $(-\frac{2}{5\epsilon},\infty)$} is given by the 4-form
\[
\Phi=\rd t\wedge\varphi_t+\psi_t,
\]
where $\varphi_t$, $\psi_t$ are given in \eqref{eq:H.flow.1}--\eqref{eq:H.flow.2}. It is straightforward to check from \eqref{eq:CY4.1} and \eqref{eq:CY4.3} that
\[
\Phi=\frac{1}{2}\widehat{\omega}^2+\Re\widehat{\Upsilon}.
\]
Thus the Hitchin flow solution in Proposition \ref{prop:Hitchin.flow} does indeed induce  the Calabi--Yau structure in Lemma \ref{lem:CY}, which then completes the proof of Theorem \ref{thm:Hitchin.flow}.
\end{remark}

\addcontentsline{toc}{section}{References}
\bibliography{Bibliografia-2020-07}
	
\end{document}